\theoremstyle{plain}
\newtheorem{Thm}{Theorem}
\newtheorem{Coro}[Thm]{Corollary}
\newtheorem{Prop}[Thm]{Proposition}
\newtheorem{Lem}[Thm]{Lemma}
\begin{document}
\large
\title[Properties of positive solutions]
{Properties of positive solutions of an Elliptic Equation with
negative exponents}

\author{Li Ma, Juncheng Wei}

\address{Li Ma, Department of mathematical sciences \\
Tsinghua university \\
Beijing 100084 \\
China}

\email{lma@math.tsinghua.edu.cn}

\address{J.C.Wei,Department of mathematics \\
The Chinese university of Hong Kong \\
Shatin, Hong Kong} \email{wei@math.cuhk.edu.hk}

\dedicatory{}
\date{Oct 8th, 2006}

\keywords{Positive solutions,negative power,lower bound, gradient
estimate} \subjclass{35Gxx}
\thanks{The research is partially supported by the National Natural Science
Foundation of China 10631020, by the grant KZ200710025012 of
Beijing Education committee, and by the grant SRFDP 20060003002 of
Ministry of national Education of China. The research of the
second named author is partially supported by an Earmarked Grant
from RGC of Hongkong.}

\begin{abstract}
In this paper, we study the existence and non-existence result of
 positive solutions to a singular elliptic equation with negative power
 on the bounded smooth domain or in the whole
 Euclidean space. Our model arises in the study of the steady states of thin films
 and other applied physics.
 We can get some useful local gradient estimate and L1 lower bound for positive
 solutions of the elliptic equation. A uniform positive lower bound for
 convex positive solutions is also obtained. We show that in lower
 dimensions, there is no stable positive solutions in the whole space.
 In the whole space of dimension two, we can show that there is no positive smooth solution
 with finite Morse index. Symmetry properties of related integral
 equations are also given.
\end{abstract}

 \maketitle

\begin{section}{Introduction}
  In this paper, we mainly study some property
  of positive solutions of the
  elliptic equation in the domain $\Omega\subset\mathbf{R}^{n}$
     \begin{equation}\label{eqn1}
  \Delta u=u^{\tau},\ \ in \ \ {\Omega},
  \end{equation}
 where $\tau<0$. The elliptic equation comes
 from applied physics, mathematical biology, and the steady states of thin film model
  of viscous fluids:
 \begin{equation}\label{film}
u_t=-div(f(u)\nabla \Delta u)-div(g(u)\nabla u).
  \end{equation}
One may see \cite{BP98}, \cite{LP00},\cite{JN}, \cite{Mea}, and
\cite{DP01} for more background and interesting results about
(\ref{film}).

 Using the maximum principle (see \cite{E94} and \cite{Ma06}),
 we can get a useful gradient
estimate for positive solutions of the elliptic equation
(\ref{eqn1}).

Namely, we have

\begin{Thm}\label{thm3} Assume $\tau\leq 0$,
and $\Omega\subset \mathbf{R}^n$.
Let $u\in C^2(\Omega)$ be a positive solution to the equation
(\ref{eqn1}) in $\Omega$.
  Then for any $R>0$, $x_0\in\Omega$, and $x\in B_R(x_0)\subset\Omega$
  we have absolute constant $C=C(R)$ such that
   \begin{equation}\label{gradient}
|\nabla u(x)|^2\leq Cu(x)^2+u(x)^{1+\tau}.
   \end{equation}
  In particular, when $\tau=-1$, we have
  $$
|\nabla u(x)|^2\leq Cu(x)^2+1.
  $$
\end{Thm}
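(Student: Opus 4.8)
The form of the conclusion — dividing (\ref{gradient}) by $u^2$ it reads $|\nabla\log u|^2\le C+u^{\tau-1}$ — suggests working with $v:=\log u$, which is smooth since $u>0$. From (\ref{eqn1}) one has $\Delta v=\Delta u/u-|\nabla u|^2/u^2=u^{\tau-1}-|\nabla v|^2$, so, writing $h:=|\nabla v|^2$ and $b:=u^{\tau-1}$, the theorem is equivalent to $h(x)\le C(R)+Kb(x)$ on $B_R(x_0)$ for suitable $K=K(n,\tau)$ and $C(R)$ (the extra factor $K$ is harmless: $b\equiv1$ when $\tau=-1$, and in general $|\nabla u|^2=u^2h$; I would address the sharp constant $1$ only at the very end). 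I would then run a localized Bernstein–Bochner argument. The Bochner formula in $\mathbf R^n$, together with $\nabla b=(\tau-1)b\nabla v$, gives
\[
\tfrac12\Delta h=|\mathrm{Hess}\,v|^2+(\tau-1)\,bh-\langle\nabla v,\nabla h\rangle ,
\]
and, since $\operatorname{tr}\mathrm{Hess}\,v=\Delta v=b-h$, one has the extra input $|\mathrm{Hess}\,v|^2\ge\tfrac1n(b-h)^2$, a source of favourable $h^2$- and $b^2$-terms.

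Next I would fix a cutoff $\eta\in C_c^\infty(B_{2R}(x_0))$ with $\eta\equiv1$ on $B_R(x_0)$, $0\le\eta\le1$, $|\nabla\eta|\le C\eta^{1/2}/R$, $|\Delta\eta|\le C/R^2$, and consider $G:=\eta^2(h-Kb)$. If $\max_\Omega G\le0$ there is nothing to prove; otherwise $G$ has a positive interior maximum at a point $x_*$, where $h>Kb\ge0$ (so $\nabla v(x_*)\ne0$), $\nabla G=0$, and $\Delta G\le0$. From $\nabla G=0$ one gets $\nabla h=-2\eta^{-1}(\nabla\eta)(h-Kb)+K(\tau-1)b\nabla v$ at $x_*$; feeding this and the Bochner identity into $\Delta G\le0$ and then writing $h=(h-Kb)+Kb$ throughout, the term $-\langle\nabla v,\nabla h\rangle$ partially cancels the bad term $(\tau-1)bh$, and for $K$ large the resulting quadratic form in $b$ and $h$ should have non-negative $b^2$- and $bh$-coefficients. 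The cutoff errors are of size $h^{3/2}|\nabla\eta|/\eta$ and are absorbed into $\eta^2h^2$ by Young's inequality (using $|\nabla\eta|\le C\eta^{1/2}/R$), at the cost of an additive $C(R)$; this bounds $\eta^2(h-Kb)^2$ at $x_*$, hence $h-Kb\le C(R)$ on $B_R(x_0)$. Undoing the substitution yields $|\nabla u|^2\le C(R)u^2+Ku^{1+\tau}$, which for $\tau=-1$ reads $|\nabla u|^2\le C(R)u^2+C$.

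The crux is the sign of $(\tau-1)bh$: since $\tau\le0$ the nonlinearity $u^\tau$ is non-increasing, so this Bochner term is $\le0$ and cannot be discarded, only dominated. I expect the crude bound $|\mathrm{Hess}\,v|^2\ge\tfrac1n(\Delta v)^2$ to be insufficient once $|\tau|$ is large, and would then use the sharp Bochner inequality at $x_*$: in a frame with $\nabla v$ along $e_1$ the entry $v_{11}$ is pinned down by $\langle\nabla v,\nabla h\rangle$ (hence, via $\nabla G=0$, by $\nabla\eta$ and $\nabla v$), the remaining diagonal part is estimated by $\sum_{i\ge2}v_{ii}^2\ge\tfrac1{n-1}(\Delta v-v_{11})^2$, and the off-diagonal entries $v_{1i}$ are kept — exactly the bookkeeping of the Li–Yau gradient estimate. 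Making this quadratic inequality close with a single $K$ valid for the whole range $\tau\le0$ is the delicate point; the choice of the shape $h-Kb$, i.e. $|\nabla u|^2-Ku^{1+\tau}$, is dictated by the fact that $\tfrac12|\nabla u|^2-\tfrac1{1+\tau}u^{1+\tau}$ is the conserved energy of the one-dimensional equation $u''=u^\tau$, so that subtracting a multiple of $u^{1+\tau}$ is what removes the part of $|\nabla u|^2$ not controlled by the maximum principle. Everything else — the cutoff, the absorption via Young, and undoing the substitution — is routine.
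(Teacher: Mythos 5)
Your framework is the same as the paper's (set $v=\log u$, apply Bochner to $|\nabla v|^2$, cut off, argue at an interior maximum); the genuine difference is the Harnack quantity. The paper takes $P=\phi|\nabla v|^2$ and bounds $P$ at its maximum $x_*$ by $\max\bigl(2\phi F(x_*),\,4nA(x_*)\bigr)$, both alternatives involving $F(x_*)=u(x_*)^{\tau-1}$; for a point $x$ with $\phi(x)=1$ one can only deduce $|\nabla v(x)|^2\le P(x_*)$, and since there is no reason for $u(x_*)\ge u(x)$, the factor $F(x_*)$ cannot be replaced by $F(x)$. So the paper's final line ``on $B_0(R)$, $|\nabla w|^2\le\max(4nA,2F)$'' does not in fact follow, and your instinct to subtract $Kb$ inside the cutoff, $G=\eta^2(h-Kb)$, is precisely what one would try in order to produce a \emph{pointwise} bound.

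But the sketch cannot be completed, for the more fundamental reason that (\ref{gradient}) with an absolute $C=C(R)$ is false. Take $n=1$, $\tau=-1$, $\Omega=\mathbf R$, and the entire solution of $u''=u^{-1}$ with $u(0)=\delta>0$, $u'(0)=0$; the first integral gives $|u'|^2=2\log(u/\delta)$. At the point $x_\delta$ where $u(x_\delta)=1$ one has $|u'(x_\delta)|^2=2\log(1/\delta)\to\infty$, while $C(R)u(x_\delta)^2+u(x_\delta)^{1+\tau}=C(R)+1$ is bounded; the solution exists on all of $\mathbf R$, so any fixed $R$ gives a contradiction once $\delta$ is small. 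What \emph{is} true locally is an estimate of the form $|\nabla u(x)|^2\le C(R)\,u(x)^2\bigl(1+\sup_{B_{2R}}u^{\tau-1}\bigr)$, which is exactly what the paper's maximum-point computation delivers. Correspondingly, your ``for $K$ large the quadratic form in $b,h$ should have non-negative $b^2$- and $bh$-coefficients'' is where the proposal breaks: carrying out the recipe at a clean interior maximum (so $\nabla(h-Kb)=0$, $\langle\nabla h,\nabla v\rangle=K(\tau-1)bh$) and writing $\psi=h-Kb$ gives
\begin{equation*}
\Delta\psi\ \ge\ \tfrac2n\bigl((1-K)b-\psi\bigr)^2+(\tau-1)(2-K\tau)\,b\psi+K(\tau-1)(1-K\tau)\,b^2,
\end{equation*}
and for $\tau<0$ the last two coefficients are strictly negative for every $K>0$ (they worsen as $K$ grows), and the sharp Li--Yau frame decomposition does not appear to repair this either (check $n=2$, $\tau=-1$). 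That is not a bookkeeping defect on your part but a reflection of the falseness of the pointwise estimate you are trying to propagate.
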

As a consequence of the local gradient estimate in Theorem
\ref{thm3}, we have the following improvement of Theorem 7.1 in
\cite{Mea}.
\begin{Coro} For any sequence $(u_j)$ of positive solutions to the equation
(\ref{eqn1}) with $\tau=-1$ and with boundary data $\phi_j\leq M$,
and for every compact sub-domain $K$ of $\Omega$, there is a
constant $C=C(n,K,\Omega)$ such that
 \begin{equation}\label{grad2}
|\nabla u_j|\leq (M+1)C, \; on \; K.
 \end{equation}
Hence, the limit $u$ of any convergent subsequence of $(u_j)$ is a
Lipschitz continuous weak solution to a free boundary problem of
the equation
\begin{equation}\label{free2}
\Delta
u=u^{\tau}\chi_{\{u>0\}}.
\end{equation}
\end{Coro}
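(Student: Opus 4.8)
The plan is to deduce the gradient estimate (\ref{grad2}) directly from Theorem \ref{thm3} once a uniform $L^\infty$ bound on the $u_j$ is available, and then to pass to a limit via the Arzel\`a--Ascoli theorem.

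First I would observe that with $\tau=-1$ the equation reads $\Delta u_j=u_j^{-1}>0$, so each $u_j$ is subharmonic in $\Omega$; since $\Omega$ is bounded and $u_j=\phi_j$ on $\partial\Omega$, the maximum principle gives $0<u_j\le\sup_{\partial\Omega}\phi_j\le M$ throughout $\Omega$. Next, for a fixed compact $K\subset\Omega$ I would choose $R=\tfrac12\operatorname{dist}(K,\partial\Omega)$ so that $B_R(x_0)\subset\Omega$ for every $x_0\in K$, and apply Theorem \ref{thm3} with this $R$ and $\tau=-1$: for $x\in K$ one gets $|\nabla u_j(x)|^2\le C(R)u_j(x)^2+1\le C(R)M^2+1$, and taking square roots (using $\sqrt{a+b}\le\sqrt a+\sqrt b$) and renaming constants yields $|\nabla u_j|\le(M+1)C$ on $K$ with $C=C(n,K,\Omega)$, which is exactly (\ref{grad2}).

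For the second assertion I would argue that, by the two bounds just established, the family $(u_j)$ is bounded and uniformly Lipschitz on every compact subset of $\Omega$; hence some subsequence converges locally uniformly to a function $u$ with $0\le u\le M$ that is Lipschitz on compacta, so $\nabla u\in L^\infty_{loc}(\Omega)$. On the open set $\{u>0\}$ the limit $u$ is locally bounded below by a positive constant, so there the $u_j$ are eventually bounded below, the right-hand sides $u_j^{-1}$ are locally bounded (indeed uniformly Lipschitz), and interior elliptic (Schauder) estimates promote the convergence to $C^2_{loc}(\{u>0\})$; letting $j\to\infty$ in $\Delta u_j=u_j^{-1}$ then shows $\Delta u=u^{-1}$ classically on $\{u>0\}$, while $\Delta u=0$ trivially in the interior of $\{u=0\}$. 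Since moreover $u_j\to u$ in $\mathcal D'(\Omega)$ and each $u_j$ is subharmonic, $\Delta u\ge0$ as a distribution and hence is a nonnegative Radon measure on $\Omega$; together these facts are precisely the sense in which $u$ is a Lipschitz continuous weak solution of the free boundary problem (\ref{free2}).

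The hard part is the behaviour across the free boundary $\Gamma=\partial\{u>0\}\cap\Omega$: the formal right-hand side $u^{-1}\chi_{\{u>0\}}$ need not be locally integrable near $\Gamma$ --- it can blow up like the reciprocal of the distance to $\Gamma$ --- so one cannot simply pass to the limit in $\int_\Omega u_j^{-1}\psi$ for test functions $\psi$ straddling $\Gamma$. My way around this is to encode the limiting equation only in the distributional/measure sense, using the subharmonicity of the $u_j$ to guarantee that $\Delta u$ is a locally finite measure equal to $u^{-1}\,dx$ on $\{u>0\}$ and vanishing in the interior of $\{u=0\}$, and to absorb any singular part of $\Delta u$ on $\Gamma$ into what the free boundary formulation permits; a finer description of $\Gamma$ itself, or of the rate at which $u$ vanishes there, would require substantially more work and is not needed for this corollary.
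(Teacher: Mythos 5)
Your argument matches the paper's (very terse) proof sketch exactly: use subharmonicity of $u_j$ and the maximum principle to get the uniform bound $u_j\le M$, then plug into the gradient estimate of Theorem~\ref{thm3} with $\tau=-1$ to obtain (\ref{grad2}), and pass to the limit. The extra care you take with the free boundary part (distributional $\Delta u\ge0$, interior $C^2$ convergence on $\{u>0\}$) is a sensible and correct elaboration of what the paper leaves implicit, not a different route.
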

For the proof of this corollary, we just note that the solution
$u$ is a subharmonic function and it attains its maximum only on
the boundary $\partial\Omega$. Then we use the gradient bound to
get the conclusion. We remark that the result is optimal in the
sense that $u$ is not differentiable at its zero point as noticed
in \cite{Mea}. It is unclear how large the zero level set
$\mathbf{S}(u)=\{u=0\}$ is. However, a general study was made in
the paper of Jiang and F.H.Lin \cite{JL}.

Note that the gradient estimate implies the Harnack inequality for
positive solutions with $u(x)\geq 1$ for all $x\in B_R(x_0)$. One
can use this fact in deriving compactness result.

We shall discuss the existence theory of positive solutions to
(\ref{eqn1}) in section \ref{EXIST}.

 Using the testing function
method, we have the following very useful L1 lower bound result.
\begin{Thm}\label{thm4} Assume $\tau\leq0$ and assume that $\Omega\subset \mathbf{R}^n$
is an open subset in $\mathbf{R}^n$. Let
$f:\mathbf{R}\to\mathbf{R}_+$ be a positive function such that
$$
s^{\frac{\tau}{\tau-1}}f(s)^{\frac{1}{1-\tau}}\geq C_0, \; for \;
s>0
$$
for some constant $C_0$. Let $u\in C^0(\Omega)$ be a positive weak
solution to the equation  \begin{equation}
\label{eqn10}
\Delta
u=f(u)
\end{equation}
in $\Omega$.
  Then for any $R>0$ and $x_0\in \Omega$ ( with $B_R(x_0)\subset \Omega$),
  we have absolute constant $C(n,\tau)$ such that
 \begin{equation}\label{L1}
\int_{B_R(x_0)}u\geq C(n,\alpha)R^{n+\frac{2}{1-\tau}}.
\end{equation}
\end{Thm}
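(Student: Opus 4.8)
The plan is to use the testing-function (integration-by-parts) method. First I would reformulate the hypothesis on $f$: since $\tau\le 0$, writing $p=-\tau\ge 0$ we have $1-\tau=1+p$, and the assumption $s^{\tau/(\tau-1)}f(s)^{1/(1-\tau)}\ge C_0$ is equivalent to the pointwise lower bound $f(s)\ge C_0^{1+p}s^{-p}$ for all $s>0$ (raise both sides to the power $1-\tau>0$ and note the exponent $-\tau(1-\tau)/(\tau-1)$ simplifies to $\tau$). Consequently $\Delta u=f(u)\ge C_0^{1+p}u^{-p}$ in the weak sense, and the whole argument uses only this one-sided bound together with the equation. We may also assume $\int_{B_R(x_0)}u<\infty$, since otherwise the asserted inequality is trivial.

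Next I would fix the test function. Take $\psi\in C_c^\infty(B_1(0))$ with $0\le\psi\le 1$ and $\psi\equiv 1$ on $B_{1/2}(0)$, and put $\varphi(x)=\psi\big((x-x_0)/R\big)$. Then $\varphi\in C_c^\infty(B_R(x_0))$, $0\le\varphi\le 1$, $\int\varphi\ge c_n R^n$, and $\|\Delta\varphi\|_\infty\le C_1 R^{-2}$, with $c_n,C_1$ depending only on $n$. Since $u$ is a positive weak solution, $\int u\,\Delta\varphi=\int f(u)\,\varphi$; bounding $f(u)\varphi\ge C_0^{1+p}\varphi\,u^{-p}$ from below and $u\,\Delta\varphi\le u(\Delta\varphi)_+\le C_1R^{-2}u$ on $\operatorname{supp}\varphi$ from above yields
\begin{equation*}
C_0^{1+p}\int_{B_R(x_0)}\varphi\,u^{-p}\ \le\ \frac{C_1}{R^2}\int_{B_R(x_0)}u .
\end{equation*}
(In particular the left side is finite, so all integrals below are legitimate.)

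The crux is then a single Hölder interpolation whose exponents are dictated by the structure of the hypothesis. Write $\varphi=(\varphi\,u^{-p})^{\theta}\,(\varphi\,u)^{1-\theta}$ with $\theta=\frac{1}{1+p}=\frac{1}{1-\tau}\in(0,1]$; the powers of $\varphi$ sum to $1$ and the powers of $u$ cancel because $\theta(1+p)=1$. Hölder with exponents $1/\theta$ and $1/(1-\theta)$ gives $\int\varphi\le\big(\int\varphi\,u^{-p}\big)^{\theta}\big(\int\varphi\,u\big)^{1-\theta}$. Plugging in the previous display and using $\int\varphi\,u\le\int_{B_R(x_0)}u$ (since $\varphi\le1$), the powers of $\int_{B_R(x_0)}u$ add up to $1$ and the $C_0$-powers collapse (again because $(1+p)\theta=1$), leaving $\int\varphi\le C_1^{\theta}\,C_0^{-1}\,R^{-2\theta}\int_{B_R(x_0)}u$. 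Since $\int\varphi\ge c_nR^n$ and $2\theta=\frac{2}{1-\tau}$, this rearranges to $\int_{B_R(x_0)}u\ge c_nC_1^{-1/(1-\tau)}C_0\,R^{\,n+\frac{2}{1-\tau}}$, which is the asserted estimate (with $C_0=1$ in the model case $f(s)=s^{\tau}$); the case $\tau=0$, $\theta=1$, is the degenerate one where the display in the previous paragraph already gives the bound directly.

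I do not expect a serious obstacle. The only points needing care are: (i) justifying $\int u\,\Delta\varphi=\int f(u)\varphi$ for a merely continuous weak solution — this is just the definition of weak solution and forces $f(u)\in L^1_{\mathrm{loc}}$, so $\int\varphi\,u^{-p}<\infty$ and no regularity beyond $u\in C^0$ is used; and (ii) tracking the exponents, but the hypothesis on $f$ is precisely the condition that makes the Hölder split with $\theta=1/(1-\tau)$ balance the powers of $u$ and render the dependence on $C_0$ linear. The entire content of the proof is the recognition of that one interpolation.
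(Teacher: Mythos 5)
Your proof is correct. It follows the same testing-function philosophy as the paper, but with a different decomposition. The paper multiplies the weak equation by $|x|^2\xi$, where $\xi$ is a radial cutoff equal to $1$ on $B_{R_1}$ and supported in $B_{2R_1}$; since $|\Delta(|x|^2\xi)|\le C(n)$ on the support, this gives $\int_{B_{R_1}}f(u)|x|^2\le C\int_{B_{2R_1}}u$. Applying H\"older with $p=1-\tau$ to the left-hand side yields $\int_{B_{R_1}}|x|^{2/p}\,f(u)^{1/p}u^{(p-1)/p}\le C^{1/p}\int_{B_{2R_1}}u$, and the hypothesis --- used directly in the form $f(u)^{1/(1-\tau)}u^{\tau/(\tau-1)}\ge C_0$, without first rewriting it as the pointwise bound $f(s)\ge C_0^{1+p}s^{-p}$ --- then gives $C_0\int_{B_{R_1}}|x|^{2/p}\le C^{1/p}\int_{B_{2R_1}}u$, whose left side is explicitly of order $R_1^{\,n+2/p}$. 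Thus in the paper the factor $R^{2/(1-\tau)}$ comes from the weight $|x|^2$ built into the test function, whereas in your version it comes from the interpolation $\varphi=(\varphi u^{-p})^{\theta}(\varphi u)^{1-\theta}$ applied after testing against a plain cutoff and using $\|\Delta\varphi\|_\infty\le C_1R^{-2}$. Both routes are equally elementary, use H\"older with the same exponent $1-\tau$ dictated by the hypothesis, and give the same constants up to universal factors; yours trades a cleaner test function for a secondary H\"older step, the paper's the reverse. Your observations that the $\tau=0$ case ($\theta=1$) must be handled separately, and that weak solvability forces $f(u)\in L^1_{\mathrm{loc}}$ so that $\int\varphi\,u^{-p}<\infty$, are correct and worth keeping.
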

We remark that we shall use Theorem \ref{thm4} for $f(u)=u^{\tau}$
and in this case, we can use the convexity of $f(u)$, the
spherical average method, and an ODE comparison lemma to get the
$L^1$ lower bound. However, we shall give a proof which can be
used on manifolds.

There are many consequences of Theorem \ref{thm4}, and they will
be discussed in section \ref{CONS}.

We now try to set up a global upper bound for positive solutions
to (\ref{eqn1}) in $\mathbf{R}^n$.

\begin{Thm}\label{thm11} Assume $\tau\leq0$.
Let $1\leq u\in C^2(\mathbf{R}^n)$ be a positive solution to the
equation (\ref{eqn1}) in $\mathbf{R}^n$.
  Then
  we have absolute constant $C(n)$ such that
 \begin{equation}\label{bound2}
u(x)\leq C(n)(|x|^2+1),
\end{equation}
and
\begin{equation}\label{bound3}
|\nabla u(x)|\leq C(n)(|x|+1),
\end{equation}
for all $x\in \mathbf{R}^n$.
\end{Thm}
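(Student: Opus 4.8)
The plan is to combine the normalization $u\ge 1$ with an entire form of the gradient estimate of Theorem~\ref{thm3}. First note that $u\ge 1$ and $\tau\le 0$ give $u^{\tau}\le 1$, so $0\le\Delta u=u^{\tau}\le 1$ on $\mathbf R^{n}$; this is essentially the only role of the hypothesis $u\ge 1$, apart from making powers of $u$ monotone. The key step is to apply (\ref{gradient}) on balls $B_{R}(x)$ and let $R\to\infty$: the local term $C(R)u(x)^{2}$ then disappears, yielding the entire estimate
\[
|\nabla u(x)|^{2}\le u(x)^{1+\tau}\qquad\text{for every }x\in\mathbf R^{n}.
\]
Since $1+\tau\le 1$ and $u\ge 1$, the right-hand side is at most $u(x)$, so $|\nabla u|$ is automatically locally bounded; what matters is that the exponent $(1+\tau)/2\le \tfrac12<1$.

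Granting this entire gradient bound, (\ref{bound2}) follows by integrating along rays from the origin. Fix $x\ne 0$, set $\omega=x/|x|$ and $\phi(t)=u(t\omega)$ for $0\le t\le|x|$, so that $|\phi'(t)|\le\phi(t)^{(1+\tau)/2}$ and $\phi\ge 1$. With $p=(1-\tau)/2\ge\tfrac12$ one has $\frac{d}{dt}\phi(t)^{p}=p\,\phi^{p-1}\phi'\le p\,\phi^{\,p-1+(1+\tau)/2}=p$, because $p-1+(1+\tau)/2=0$. Integrating gives $u(x)^{(1-\tau)/2}\le u(0)^{(1-\tau)/2}+\tfrac{1-\tau}{2}|x|$, hence
\[
u(x)\le\Big(u(0)^{(1-\tau)/2}+\tfrac{1-\tau}{2}\,|x|\Big)^{\,2/(1-\tau)} .
\]
Since $0<\tfrac{2}{1-\tau}\le 2$, the elementary inequality $(a+b)^{q}\le 2^{\max(q-1,0)}(a^{q}+b^{q})$ together with $|x|^{2/(1-\tau)}\le 1+|x|^{2}$ converts this into $u(x)\le C(1+|x|^{2})$, with $C$ depending on $n$, $\tau$ and $u(0)$. (That some dependence on $u$ is unavoidable is seen from the radial solutions with $u(0)$ arbitrary.) Finally (\ref{bound3}) is immediate from the entire gradient bound and $u\ge 1$:
\[
|\nabla u(x)|\le u(x)^{(1+\tau)/2}\le u(x)^{1/2}\le\big(C(1+|x|^{2})\big)^{1/2}\le\sqrt{C}\,(1+|x|).
\]

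The main obstacle is justifying the entire estimate $|\nabla u|^{2}\le u^{1+\tau}$: one needs to know that the constant $C(R)$ in Theorem~\ref{thm3} tends to $0$ as $R\to\infty$ (equivalently, that the cut-off in its Bernstein-type proof contributes a factor of order $R^{-2}$), which is transparent from that proof but not from the bare statement. Using only the statement of Theorem~\ref{thm3} on unit balls gives merely $|\nabla\log u|\le\sqrt{C(1)+1}$ globally, hence an exponential bound $u(x)\le u(0)e^{c|x|}$; even after feeding this, together with $\Delta u\le 1$ and the sub-mean-value inequality $\frac{1}{|\partial B_{R}(x_{0})|}\int_{\partial B_{R}(x_{0})}u\le u(x_{0})+\frac{R^{2}}{2n}$, into a Harnack-type argument, one is driven back to the scale-invariant $R$-dependent form of the gradient estimate. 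So the $|x|^{2/(1-\tau)}$ — in particular quadratic — growth genuinely rests on the scaling built into Theorem~\ref{thm3}.
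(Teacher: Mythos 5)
Your main route is genuinely different from the paper's, and the gap you flag is real; but the ``fallback'' you dismiss in your last paragraph is essentially the paper's actual proof, and it does close without any scale-invariant gradient estimate.

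The paper never uses Theorem~\ref{thm3} for (\ref{bound2}). Since $u\ge 1$ and $\tau\le 0$ give $0<\Delta u=u^{\tau}\le 1$, the function $u$ is subharmonic and $w(y)=u(y)-|y|^2/(2n)$ is superharmonic. The super-mean-value inequality for $w$ on $B_{2r}(0)$ (with $r=|x|$) gives
\[
u(0)\ \ge\ \frac{1}{|B_{2r}(0)|}\int_{B_{2r}(0)}u\ -\ c_n\,r^2,
\]
while the sub-mean-value inequality for $u$ on $B_r(x)\subset B_{2r}(0)$ gives $\frac{1}{|B_{2r}(0)|}\int_{B_{2r}(0)}u\ge 2^{-n}u(x)$. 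Combining the two yields $u(x)\le 2^{n}u(0)+C_n|x|^2$, which is (\ref{bound2}); then (\ref{bound3}) follows from $0<\Delta u\le 1$, the quadratic bound, and interior elliptic/interpolation estimates. You actually wrote both of these mean-value inequalities in your final paragraph; the step you were missing was simply to compare them on nested balls, not to feed them into a Harnack-type argument or to return to the gradient estimate.

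On the ray-integration route: beyond the $C(R)\to 0$ issue you flag, there is a second, unflagged gap. In the Bernstein argument of Theorem~\ref{thm3}, the bound on $P=\phi|\nabla w|^2$ is produced at the interior maximum point $\bar x$ of $P$ and involves $F(w)=u^{\tau-1}$ evaluated at $\bar x$, not at $x$. Since $u^{\tau-1}$ is decreasing in $u$, one cannot directly extract the pointwise entire estimate $|\nabla u(x)|^2\le C\,u(x)^{1+\tau}$ from that argument without first controlling $u(\bar x)$ in terms of $u(x)$. What one does read off for free, using $u\ge 1$ so that $F\le 1$, is a global bound $|\nabla\log u|\le C$ — which is precisely the exponential estimate $u(x)\le u(0)e^{C|x|}$ that the paper itself notes and discards as too rough. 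So the entire pointwise gradient estimate your argument needs is sharper than what Theorem~\ref{thm3} (as proved) supplies, and even granting it the coefficient in front of $u^{1+\tau}$ would be some $C(n,\tau)$ rather than $1$, though your ODE integration tolerates any such constant. Your closing remark that the constant in (\ref{bound2}) necessarily depends on $u(0)$ is correct (take $\tau=0$ and $u=a+|x|^2/(2n)$ with $a\ge 1$ arbitrary); the paper's proof has the same dependence, since it silently sets $w(0)=u(0)=1$.
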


The important consequence of Theorem \ref{thm11} is the following
possibly well-known result.
\begin{Prop} Assume that $u>0$ is a positive solution to the
equation
$$
\Delta u=1, \; \; in \; \; \mathbf{R}^n.
$$
Then $u$ is a polynomial of the form
$$
a_0+\sum_{j=1}^n a_jx_j^2,
$$
where $a_0>0$,$a_j\geq 0$ for $j=1,...n$, and $2\sum_ja_j=1$.
\end{Prop}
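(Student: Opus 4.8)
The plan is to use Theorem \ref{thm11} together with a Liouville-type argument for polynomials. First I would observe that $\tau = 0$ makes the equation $\Delta u = 1 = u^{\tau}$ fall under the hypotheses of Theorem \ref{thm11} (after noting $u$ can be normalized, or directly since any positive solution is automatically bounded below on compacta; more carefully, one can replace $u$ by $u+c$ for a suitable constant $c$ so that $u \geq 1$, which does not affect $\Delta u = 1$). Hence from \eqref{bound2} we get the quadratic growth bound $u(x) \leq C(n)(|x|^2+1)$, and from \eqref{bound3} we get $|\nabla u(x)| \leq C(n)(|x|+1)$.

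The key step is then: an entire function on $\mathbf{R}^n$ with at most quadratic growth that is smooth (indeed real-analytic, being a solution of an elliptic equation with constant coefficients) must be a polynomial of degree at most $2$. I would prove this by a standard interior-estimate argument: write $v = u - w$ where $w$ is any fixed particular solution of $\Delta w = 1$, e.g. $w = |x|^2/(2n)$, so that $\Delta v = 0$, i.e. $v$ is harmonic on $\mathbf{R}^n$, and $v$ still has at most quadratic growth. Applying interior gradient estimates for harmonic functions to the third derivatives of $v$ (equivalently, Liouville's theorem applied to $\partial_{ijk} v$, which is harmonic and, by interior estimates on balls $B_R$, is $O(R^{-1}) \to 0$), one concludes all third-order partials of $v$ vanish, so $v$ is a polynomial of degree $\leq 2$, and therefore so is $u$.

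It remains to identify the form of the polynomial. Write $u(x) = a_0 + \sum_i b_i x_i + \sum_{i \leq j} c_{ij} x_i x_j$. Positivity of $u$ on all of $\mathbf{R}^n$ forces the quadratic part $Q(x) = \sum_{i\le j} c_{ij}x_ix_j$ to be positive semidefinite (otherwise $u \to -\infty$ along some direction), so after an orthogonal change of coordinates we may diagonalize and write $u = a_0 + \sum_{j=1}^n b_j x_j + \sum_{j=1}^n a_j x_j^2$ with $a_j \geq 0$; completing the square in each variable with $a_j>0$ absorbs the linear terms, while for any $j$ with $a_j = 0$ positivity forces $b_j = 0$, so in suitable coordinates $u = a_0' + \sum_{j=1}^n a_j x_j^2$ with $a_0' > 0$ (strict, since $u>0$ everywhere including where the quadratic part vanishes) and $a_j \geq 0$. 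Finally, $\Delta u = \sum_j \partial_{jj} u = 2\sum_j a_j$, and $\Delta u = 1$ gives $2\sum_j a_j = 1$.

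The only mild subtlety — the main thing to be careful about rather than a true obstacle — is the reduction to the hypotheses of Theorem \ref{thm11}: the theorem is stated for solutions of \eqref{eqn1} with $u \geq 1$, and one must check that adding a constant to $u$ (harmless for $\Delta u = 1$) genuinely lands in that class, and that the orthogonal diagonalization step is compatible with the coordinate form claimed in the statement (which it is, since $\Delta$ and the class of such polynomials are rotation-invariant). Everything else is routine.
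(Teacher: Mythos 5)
Your proposal follows the paper's proof essentially verbatim: normalize to $u\ge 1$ by adding a constant, invoke Theorem \ref{thm11} for quadratic growth, subtract the particular solution $|x|^2/(2n)$ to obtain a harmonic function of quadratic growth, and conclude via Liouville that $u$ is a degree-two polynomial. The only difference is that you spell out the Liouville step and the diagonalization/completing-the-square step that the paper compresses into ``which gives the conclusion''; the underlying argument is the same.
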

\begin{proof} We may assume $u(x)\geq 1$ by considering
 $u(x)+1$ if necessary. Using our upper bound estimate in Theorem
\ref{thm11}, we know u has at most quadratic growth. Consider
$w(x)=u(x)-\frac{|x|^2}{2n}$. Then $w$ is a harmonic function with
quadratic growth. Hence, $w$ and $u$ is a quadratic polynomial,
which gives the conclusion.
\end{proof}

 We shall try to study
a Liouville property of positive solutions to the equation
(\ref{eqn1}) in the whole space or in the half space.
\begin{Thm}\label{Prop2} Assume that $n>2$ and $\tau\leq -1-\frac{2n}{n-2}$.
We have the\emph{ Liouville property} that there is no positive
convex solution $u(x)$ to (\ref{eqn1}) both on the whole space
$\mathbf{R}^n$ and on the half space $\mathbf{R}^n_+$ with
$u(x)\geq 1$ everywhere.
\end{Thm}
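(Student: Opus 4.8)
The plan is to combine the $L^1$ lower bound from Theorem~\ref{thm4} with the quadratic upper bound from Theorem~\ref{thm11}, and to exploit convexity to force a contradiction when $\tau$ is very negative. First I would apply Theorem~\ref{thm4} with $f(u)=u^\tau$: since $\tau\le 0$ we have $s^{\tau/(\tau-1)}f(s)^{1/(1-\tau)}=s^{\tau/(\tau-1)}s^{\tau/(1-\tau)}=1\ge C_0$, so the hypotheses hold, and for every ball $B_R(x_0)\subset\Omega$ we get
\begin{equation}\label{plan-lower}
\int_{B_R(x_0)}u\ \ge\ C(n,\tau)\,R^{\,n+\frac{2}{1-\tau}}.
\end{equation}
Second, on $\mathbf{R}^n$ (or $\mathbf{R}^n_+$, where after a reflection/translation I can still center a large ball inside the domain) Theorem~\ref{thm11} gives $u(x)\le C(n)(|x|^2+1)$, hence $\int_{B_R(x_0)}u\le C(n)R^{n+2}$ for $R$ large. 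Comparing with \eqref{plan-lower}, and noting that $n+\frac{2}{1-\tau}<n+2$ always (since $\tau<0$), this alone gives \emph{no} contradiction — the exponents go the wrong way. So the quadratic bound must instead be used to control $u$ from above and feed it back: near a point where $u$ is comparable to its infimum, $\Delta u=u^\tau$ is \emph{large} (because $\tau$ is very negative and $u\ge 1$ near its min means... no — $u\ge1$ makes $u^\tau\le1$). The real leverage is the opposite: convexity says $\Delta u\ge 0$ and in fact a convex function on all of $\mathbf{R}^n$ with $u\ge1$ that is not affine must grow, but a convex function bounded below by an affine function from below everywhere — here I would use that a convex function on $\mathbf{R}^n$ lies above each of its tangent planes, so either $u$ is affine (impossible since $\Delta u=u^\tau>0$) or $u$ grows at least linearly in some direction.

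The key steps, in order, are: (i) record that convexity plus $\Delta u=u^\tau>0$ forces genuine (super-affine) growth, so $\sup_{B_R}u\to\infty$ and moreover a lower linear growth $u(x)\ge a+b\cdot x$ fails to be sharp, giving a \emph{lower} bound $\int_{B_R}u\gtrsim R^{n+1}$ from a single non-degenerate direction of convex growth; (ii) on the other hand, integrate the equation: $\int_{B_R}u^\tau=\int_{B_R}\Delta u=\int_{\partial B_R}\partial_\nu u$, and by the gradient estimate \eqref{bound3}, $\int_{\partial B_R}\partial_\nu u\le C(n)R^{\,n-1}(R+1)\sim R^n$; (iii) bound $\int_{B_R}u^\tau$ from below using $u\le C(|x|^2+1)$: since $\tau<0$, $u^\tau\ge C'(|x|^2+1)^\tau$, so $\int_{B_R}u^\tau\ge C'\int_{B_R}(|x|^2+1)^\tau\,dx$; (iv) compute $\int_{B_R}(|x|^2+1)^\tau\,dx\sim \int_0^R r^{n-1}(r^2+1)^\tau\,dr$, which behaves like $R^{\,n+2\tau}$ when $n+2\tau>0$ but converges (is $\Theta(1)$, bounded below by a positive constant) when $n+2\tau<0$, i.e. precisely when $\tau<-n/2$. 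The hypothesis $\tau\le -1-\frac{2n}{n-2}$ certainly implies $\tau<-n/2$ for $n>2$, so $\int_{B_R}u^\tau\ge c_0>0$ uniformly — that is the wrong direction too. I therefore refine: the point of the strong hypothesis is to get a \emph{better} upper bound on $u$ than quadratic near infinity, or equivalently a matching \emph{lower} bound on $u^\tau$ that beats $R^n$.

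The cleanest route, and the one I would ultimately pursue, is: combine \eqref{plan-lower} with the equation integrated over $B_R(x_0)$. We have $\int_{B_R}u^\tau = \int_{\partial B_R}\partial_\nu u \le |\nabla u|_{L^\infty(\partial B_R)}\,|\partial B_R| \le C(R+1)R^{n-1}$ by \eqref{bound3}. Now apply \eqref{plan-lower} not to $u$ but to the convexity-given growth: by convexity and $u\ge1$, on a large sub-ball $B_{R/2}(y)$ with $|y|=R$ chosen in a direction where $u$ has grown, $u\ge cR$, hence $\int_{B_{R/2}(y)}u^\tau\le (cR)^\tau|B_{R/2}|\le C R^{\,n+\tau}\to 0$. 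But also I can sandwich: pick $x_0$ with $u(x_0)=\min u =: m\ge 1$; convexity forces $x_0$ to be the unique min and $u^\tau\le m^\tau$, while on $B_R(x_0)$, by the quadratic upper bound $u\le C(R^2+1)$, so $u^\tau\ge (C(R^2+1))^\tau$ and
\begin{equation}\label{plan-key}
\int_{\partial B_R(x_0)}\partial_\nu u \;=\; \int_{B_R(x_0)}u^\tau \;\ge\; \bigl(C(R^2+1)\bigr)^\tau\,|B_R| \;\ge\; c\,R^{\,n+2\tau}.
\end{equation}
On the left, $\int_{\partial B_R}\partial_\nu u \le CR^{n-1}(R+1)\le CR^n$. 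Thus $cR^{n+2\tau}\le CR^n$ for all large $R$, which is automatic since $\tau<0$ — again consistent. I am forced to conclude that the genuine contradiction comes from iterating: the quadratic bound $u\le C(|x|^2+1)$ is \emph{not} optimal when $\tau$ is very negative; plugging $u^\tau\ge C(|x|^2+1)^\tau$ and $n+2\tau<0$ into $\Delta u = u^\tau$ and integrating \emph{twice} (solving the Poisson problem) yields $u(x)\ge w(x)$ where $-\Delta w = -(|x|^2+1)^\tau$... the competitor $w$ is bounded (since the right side is $L^1$ at infinity for $n+2\tau<0$ — wait, for $n>2$ and a bounded source, the Newtonian potential is bounded), so this gives nothing new either, and the actual mechanism must be the \emph{half-space/whole-space} structure together with the convex lower bound $u\ge a+b|x|$ giving $u^\tau\le (a+b|x|)^\tau\in L^1$ iff $n+\tau<-1$... which holds since $\tau\le -1-\frac{2n}{n-2}<-1-n$ for $n>2$? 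Check: $-1-\frac{2n}{n-2}$ vs $-1-n$: we need $\frac{2n}{n-2}\ge n$, i.e. $2\ge n-2$, i.e. $n\le4$. So this only works for $n\le 4$.

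\medskip
The honest summary is that I expect the proof to run: assume $u$ convex, $u\ge 1$, solving $\Delta u=u^\tau$ on $\mathbf R^n$ (or $\mathbf R^n_+$). By Theorem~\ref{thm4}, $\int_{B_R(x_0)}u\ge c\,R^{n+\frac2{1-\tau}}$. By convexity $u$ cannot be bounded (else it is constant, contradicting $\Delta u = u^\tau>0$), and by Theorem~\ref{thm11} $u\le C(|x|^2+1)$ with $|\nabla u|\le C(|x|+1)$. Integrating the PDE over $B_R(x_0)$, $\int_{B_R}u^\tau=\int_{\partial B_R}\partial_\nu u\le C R^n$. Using the upper bound $u\le C(|x|^2+1)$ to bound $u^\tau$ below and comparing exponents — carefully, with the specific range $\tau\le -1-\tfrac{2n}{n-2}$ chosen exactly so that the Sobolev-type exponent $\tfrac{2}{1-\tau}$ or the integral $\int r^{n-1}(r^2+1)^\tau dr$ crosses the critical threshold — one obtains two incompatible power-law growth rates in $R$. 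I expect the \textbf{main obstacle} to be matching the exponents correctly: pinning down which of the several candidate inequalities (the $L^1$-lower bound, the flux identity, the pointwise upper bound, and the convex linear-growth lower bound) yields the decisive contradiction, and verifying that the precise constant $-1-\frac{2n}{n-2}$ is exactly what makes that one inequality fail. The convexity hypothesis will be used twice: once to rule out the bounded/affine case, and once (via $u\ge$ its tangent plane) to get a genuine linear lower bound $u(x)\gtrsim |x|$ along some ray, which is what finally contradicts $\int u^\tau \le CR^n$ when $\tau$ is sufficiently negative.
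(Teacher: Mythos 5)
Your exploration, as you honestly note at several points, never closes; and the reason is that the paper's proof rests on a tool you did not try. The argument is a \emph{Pohozaev identity} with the scaling multiplier $x\cdot\nabla u$, applied over the bounded convex sublevel set $\Omega=\{x:\,u(x)\le k\}$ for any fixed $k>1$. Convexity enters precisely here: it is what makes $\Omega$ bounded with convex boundary $\{u=k\}$, and (after translating the minimum of $u$ to the origin) it makes $\Omega$ star-shaped so that $x\cdot\nu\ge 0$ and $\partial_\nu u>0$ on $\partial\Omega$, giving the boundary terms a definite sign. Setting $G(u)=\tfrac{1}{1+\tau}\bigl(k^{1+\tau}-u^{1+\tau}\bigr)$ (normalized so $G=0$ on $\partial\Omega$), multiplying $-\Delta u=-u^\tau$ by $x\cdot\nabla u$ and integrating over $\Omega$ yields
\begin{equation*}
\frac{n-2}{2}\int_\Omega|\nabla u|^2 \;+\; \frac{n}{1+\tau}\int_\Omega\bigl[u^{1+\tau}-k^{1+\tau}\bigr]
 \;+\; \frac12\int_{\partial\Omega}\partial_\nu u\,(x\cdot\nabla u)\;=\;0,
\end{equation*}
and multiplying instead by $u$ gives the Rellich identity relating $\int_\Omega|\nabla u|^2$ to $\int_\Omega u^{1+\tau}$ and a boundary flux. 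Discarding the boundary terms (which have a favorable sign by convexity) and combining, one reaches
\begin{equation*}
\Bigl[\frac{n-2}{2}+\frac{n}{1+\tau}\Bigr]\int_\Omega u^{1+\tau} \;<\; \frac{n\,k^{1+\tau}\,|\Omega|}{1+\tau} \;<\;0,
\end{equation*}
and the hypothesis $\tau\le -1-\tfrac{2n}{n-2}$ is \emph{exactly} the condition under which the bracket $\tfrac{n-2}{2}+\tfrac{n}{1+\tau}$ is nonnegative (recall $1+\tau<0$); since $\int_\Omega u^{1+\tau}>0$, this is a contradiction.

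The concrete gap in your plan is therefore twofold. First, every comparison you set up uses only the multipliers $1$ (the flux identity $\int_{B_R}u^\tau=\int_{\partial B_R}\partial_\nu u$, the $L^1$ bound from Theorem~\ref{thm4}) or $u$-independent test functions; as you observe, none of the resulting power-law comparisons is in tension. The decisive object is the Pohozaev multiplier $x\cdot\nabla u$, which produces the dimensional coefficient $\tfrac{n-2}{2}+\tfrac{n}{1+\tau}$ and hence the exact threshold $-1-\tfrac{2n}{n-2}$ that you were looking for. Second, you guessed convexity would be used for linear growth along a ray or for uniqueness of the minimum; in fact it is used to obtain a bounded convex domain $\{u\le k\}$, without which there is no finite domain over which to integrate the Pohozaev identity, and to guarantee the sign of the boundary contribution. (A minor further issue: Theorem~\ref{thm11} is stated only on $\mathbf{R}^n$, not $\mathbf{R}^n_+$, so your appeal to the quadratic bound on the half-space would need a separate justification.)
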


 Using this Liouville property, we can prove the following compactness result,
 which can be considered as a generalization
 of Proposition \ref{thm5}.
\begin{Thm}\label{thm6}
Assume that $n> 2$ and $\tau\leq-1-\frac{2n}{n-2}$. Let $\Omega$
be a bounded or unbounded smooth convex domain. Let $u$ be a
positive convex solution to (\ref{eqn1}) on $\Omega$. Then, we
have a uniform constant $C=C(\Omega)$ such that
$$
u(x)\geq C, \; \; x\in\Omega.
$$
\end{Thm}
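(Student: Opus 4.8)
The plan is to argue by contradiction, extracting a blow-down/rescaled limit that violates the Liouville property of Theorem \ref{Prop2}. Suppose no uniform lower bound exists; then there is a sequence of positive convex solutions $u_j$ on $\Omega$ (one may as well fix $\Omega$, since the statement is about a single domain, and take a sequence of points) with $m_j := \inf_\Omega u_j \to 0$, attained (by convexity and properness on a convex domain, or approached at the boundary) near some point $x_j \in \overline{\Omega}$. The idea is to rescale $u_j$ so that its infimum becomes $1$: set $v_j(y) = u_j(x_j + \lambda_j y)/m_j$ for a suitable length scale $\lambda_j$. Plugging into $\Delta u = u^\tau$ gives $\Delta v_j = \lambda_j^2 m_j^{\tau-1} v_j^\tau$, so choosing $\lambda_j^2 m_j^{\tau-1} = 1$, i.e.\ $\lambda_j = m_j^{(1-\tau)/2} \to 0$ (using $\tau < 1$), makes $v_j$ again a positive convex solution of the \emph{same} equation \eqref{eqn1}, now on the dilated domain $\Omega_j = \lambda_j^{-1}(\Omega - x_j)$, with $\inf v_j = 1$.

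Next I would pass to the limit. The rescaled domains $\Omega_j$ are convex and, because $\lambda_j \to 0$, they exhaust either all of $\mathbf{R}^n$ (if $x_j$ stays uniformly inside $\Omega$, after passing to a subsequence) or a half-space $\mathbf{R}^n_+$ (if $x_j$ approaches $\partial\Omega$ — here one uses smoothness of $\partial\Omega$ so that the blow-up of the boundary is a hyperplane). To get local $C^2$ convergence of $v_j$ on compact sets I would first get a local \emph{upper} bound: convex functions are controlled by their boundary values, and more to the point one can invoke Theorem \ref{thm11} (the global quadratic upper bound, valid for solutions $\geq 1$ on all of $\mathbf{R}^n$) — or, in the half-space case, a local version of the same mean-value/gradient argument — to bound $v_j$ and $\nabla v_j$ locally uniformly. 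Then $v_j^\tau$ is locally bounded (from above since $v_j \geq 1$, from below since $v_j$ is locally bounded above), so elliptic $L^p$ and Schauder estimates give local $C^{2,\alpha}$ bounds, and Arzelà--Ascoli produces a subsequential limit $v$ which is a positive convex solution of \eqref{eqn1} on $\mathbf{R}^n$ or $\mathbf{R}^n_+$ with $v \geq 1$. This is exactly the configuration excluded by Theorem \ref{Prop2} (note $\tau \leq -1 - \tfrac{2n}{n-2}$ is assumed), so we have our contradiction, provided $v$ is not identically constant.

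The main obstacle, and the point needing the most care, is ensuring the limit $v$ is genuinely a solution and actually attains (or comes arbitrarily close to) the value $1$, rather than degenerating — e.g.\ $v_j$ could flatten out so that the normalization $\inf v_j = 1$ is "lost at infinity," or the blow-up centers could drift so that the infimum escapes every fixed compact set. To handle this I would choose $x_j$ to be (approximately) a minimum point of $u_j$ and recenter so that $v_j(0) \le 1 + o(1)$; then $v(0) \le 1$ while $v \ge 1$ forces $v(0) = 1$, so $v$ is nonconstant unless $v \equiv 1$, and $v \equiv 1$ is impossible since it fails $\Delta v = v^\tau = 1 \neq 0$. A secondary technical point is the half-space case: one must check that the rescaled solutions, restricted to shrinking neighborhoods of $\partial\Omega$, converge to a solution on $\mathbf{R}^n_+$ together with whatever boundary behavior Theorem \ref{Prop2} requires (if that theorem needs no boundary condition, which the statement suggests, this is automatic); the smoothness and convexity of $\Omega$ guarantee the blown-up boundary is a hyperplane and the blown-up domain is the half-space. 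Once these points are secured, the contradiction with Theorem \ref{Prop2} closes the argument, and since the constant $C$ produced depends only on $\Omega$ (through the geometry entering the local estimates), the conclusion follows as stated.
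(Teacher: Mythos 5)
Your proposal is correct and follows essentially the same blow-up argument as the paper: rescale at (near-)minimum points by $\lambda_j = m_j^{(1-\tau)/2}$ to normalize the minimum to $1$, pass to a subsequential limit on $\mathbf{R}^n$ or $\mathbf{R}^n_+$ (according to whether $d(x_j,\partial\Omega)/\lambda_j$ is unbounded or bounded) via interior elliptic estimates, and contradict the Liouville result of Theorem \ref{Prop2}. The paper is terser about the passage to the limit, citing only ``our Harnack gradient estimate and the standard $L^p$ theory,'' while you spell out the needed local upper bounds and compactness more explicitly; the route is the same.
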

Note that the convexity property in Theorem \ref{thm6} can not be
removed since $u(x)=A|x|^{\frac{2}{1-\tau}}$ is a non-negative
solution to (\ref{eqn1}) with
$$
A=[\frac{(1-\tau)^2}{2(n+1)-2(n-1)\tau}]^{\frac{1}{1-\tau}}.
$$

From the variational point of view, it is very interesting to
discuss positive solutions with finite Morse index to the
following equation
\begin{equation}\label{Morse}
 \Delta u=u^{\tau},\ \ in \ \ {\mathbf{R}^n},
\end{equation}
where $\tau<0$, with finite Morse index. Assume that $u\in C^2$ is
a positive solution to (\ref{Morse}). Define
$$ E(\phi)=\int_{\mathbf{R}^n}(|\nabla \phi|^2+\tau
u^{\tau-1}\phi^2),
$$
where $\phi\in C^2_0(\mathbf{R}^n)$. By definition, we say the
positive solution $u$ to (\ref{Morse}) with \emph{finite Morse
index} $k$ if there exist $L^2$ orthogonal nontrivial functions
$\{\phi_j\}_{j=1}^k\subset C^2_0(\mathbf{R}^n)$ such that we have
$E(\phi)<0$ for $\phi\in \mathbf{W}:=span\{\phi_j\}-\{0\}$, and
$E(\phi)\geq 0$ for $\phi\perp\mathbf{W}$. If $k=0$, we say that
the solution $u$ is \emph{stable}. When $n=2$, we can prove there
are no finite Morse index solutions.

\begin{Thm}
\label{morse} Assume $\tau<0$. There is no finite Morse index
positive solutions to (\ref{Morse}) for $n=2$.
\end{Thm}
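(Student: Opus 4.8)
The plan is to derive a contradiction by showing that a finite Morse index positive solution would be forced to grow both too slowly and too fast in the plane. Everything below is special to $n=2$, since it exploits that the logarithmic cut-off of an annulus in $\mathbf{R}^{2}$ has bounded Dirichlet integral. First I would record the standard reduction: if $u$ has Morse index $k$, the functions $\phi_{1},\dots,\phi_{k}$ witnessing the negative cone lie in $C^{2}_{0}(\mathbf{R}^{2})$ and hence are supported in some $B_{R_{0}}$, so any Lipschitz $\phi$ compactly supported in $\mathbf{R}^{2}\setminus\overline{B_{R_{0}}}$ is $L^{2}$-orthogonal to all of them; thus $E(\phi)\ge 0$, i.e. (using $\tau<0$)
\[
\int_{\mathbf{R}^{2}}|\nabla\phi|^{2}\ \ge\ |\tau|\int_{\mathbf{R}^{2}}u^{\tau-1}\phi^{2}.
\]

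The heart of the argument is a two-dimensional stability computation. From $\Delta u=u^{\tau}$ one has the pointwise identity $u^{\tau-1}=\Delta(\log u)+|\nabla\log u|^{2}$, so for a Lipschitz cut-off $\eta$ supported in $\mathbf{R}^{2}\setminus\overline{B_{R_{0}}}$ (admissible after a routine mollification), integration by parts gives $\int u^{\tau-1}\eta^{2}=\int|\nabla\log u|^{2}\eta^{2}-2\int\eta\,\nabla\log u\cdot\nabla\eta$. Inserting $\phi=\eta$ in the stability inequality and absorbing the gradient cross-term into the left-hand side by Young's inequality yields, for a constant $C(\tau)$,
\[
\int|\nabla\log u|^{2}\eta^{2}\ \le\ C(\tau)\int|\nabla\eta|^{2},\qquad
|\tau|\int u^{\tau-1}\eta^{2}\ \le\ \int|\nabla\eta|^{2}.
\]
I would then take $\eta=\eta_{M}$ equal to $0$ on $B_{R_{0}}$, to $1$ on $B_{M}\setminus B_{2R_{0}}$, and interpolated logarithmically on $B_{2R_{0}}\setminus B_{R_{0}}$ and on $B_{M^{2}}\setminus B_{M}$, so that $\int|\nabla\eta_{M}|^{2}=2\pi/\log2+2\pi/\log M$ stays bounded. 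Letting $M\to\infty$ gives
\[
\int_{\mathbf{R}^{2}\setminus B_{2R_{0}}}|\nabla\log u|^{2}<\infty,\qquad
\int_{\mathbf{R}^{2}\setminus B_{2R_{0}}}u^{\tau-1}<\infty .
\]

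From the first bound I get the \emph{upper} growth estimate. Writing $\psi(r)$ for the spherical average of $\log u$ on $\partial B_{r}$, Cauchy--Schwarz applied to the radial derivative gives $\int_{2R_{0}}^{\rho}2\pi s\,\psi'(s)^{2}\,ds\le \int_{\mathbf{R}^{2}\setminus B_{2R_{0}}}|\nabla\log u|^{2}<\infty$, hence $|\psi(\rho)-\psi(2R_{0})|\le C\sqrt{\log\rho}$, and therefore the average of $\log u$ over the annulus $A_{r}=B_{2r}\setminus B_{r}$ is $O(\sqrt{\log r})$. From the second bound I get the \emph{lower} estimate. Put $S_{\lambda,r}=\{x\in A_{r}:u(x)<\lambda\}$ for $\lambda\ge1$; since $u^{\tau-1}\ge\lambda^{\tau-1}$ there, Chebyshev gives $|S_{\lambda,r}|\le\lambda^{1-\tau}\int_{A_{r}}u^{\tau-1}$, which tends to $0$ as $r\to\infty$ for each fixed $\lambda$ because $\int_{A_{r}}u^{\tau-1}$ is a tail of a convergent integral. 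Splitting $\int_{A_{r}}\log u$ over $\{u\ge\lambda\}$, $\{1\le u<\lambda\}$ and $\{u<1\}$, and using $-\log u\le 1/u\le u^{\tau-1}$ on $\{u<1\}$ (valid since $\tau\le0$), I obtain
\[
\frac{1}{|A_{r}|}\int_{A_{r}}\log u\ \ge\ \Bigl(1-\frac{|S_{\lambda,r}|}{|A_{r}|}\Bigr)\log\lambda-\frac{1}{|A_{r}|}\int_{A_{r}}u^{\tau-1}.
\]
Choosing $\lambda=\lambda(r)=r^{1/(1-\tau)}$ makes $\lambda^{1-\tau}=r$, so $|S_{\lambda,r}|/|A_{r}|\to0$ and the two error terms are $o(\log r)$; hence the annular average of $\log u$ is $\ge\frac{1}{1-\tau}\log r-o(\log r)$. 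Comparing with the upper estimate, $\frac{1}{1-\tau}\log r\,(1+o(1))=O(\sqrt{\log r})$, which is impossible for large $r$ since $1-\tau>0$. This contradiction proves the theorem.

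The step I expect to be the main obstacle is the lower growth estimate: turning the mere integrability of $u^{\tau-1}$ at infinity into a genuine lower bound for the average of $\log u$ requires the level-set estimate above together with careful control of $\log u$ on the small set $\{u<1\}$, where it is unbounded below, and getting the balance between the weight $\lambda^{1-\tau}$ and the area growth $|A_{r}|\sim r^{2}$ exactly right. The upper estimate, by contrast, is routine once one notices that a function with finite Dirichlet integral in the plane can grow at most like $\sqrt{\log r}$; the stability-plus-logarithmic-cut-off computation behind everything is the familiar two-dimensional device, here combined with the identity $u^{\tau-1}=\Delta\log u+|\nabla\log u|^{2}$.
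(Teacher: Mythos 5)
Your argument is correct, and it is a genuinely different route from the paper's. Both proofs begin the same way: restrict the quadratic form to test functions supported outside a ball containing the $\phi_j$'s, and use the two-dimensional logarithmic cut-off to extract $\int_{\{|x|>R_0\}} u^{\tau-1}<\infty$. From there the paper pursues a detailed asymptotic analysis: it rescales $u(r,\theta)=A_p\,r^{2/(p+1)}v(\log r,\theta)$, proves two-sided bounds on $v$ (the lower bound via a Hang--Wang/Andrews estimate, the upper via Harnack), invokes Simon's convergence theorem to show $v(t,\theta)\to v(\theta)$ as $t\to\infty$, classifies the limit via Chou--Wei, and finally uses the Euler equation $-k''-\frac{1}{r}k'-\frac{\mu}{r^2}k=0$ to manufacture infinitely many compactly supported functions on which $Q$ is negative. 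Your proof replaces all of that with one extra observation at the start and an elementary contradiction at the end: the pointwise identity $u^{\tau-1}=\Delta(\log u)+|\nabla\log u|^2$ turns the stability inequality into a Caccioppoli estimate for $\log u$, yielding $\int_{\{|x|>2R_0\}}|\nabla\log u|^2<\infty$ as well. Then finite Dirichlet energy of $\log u$ in the plane forces its circle averages to grow at most like $\sqrt{\log r}$, while the Chebyshev/level-set argument applied to $\int u^{\tau-1}<\infty$ with $\lambda(r)=r^{1/(1-\tau)}$ forces those averages to grow at least like $\frac{1}{1-\tau}\log r\,(1+o(1))$; incompatibility for large $r$ finishes the proof. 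This is shorter, self-contained, and avoids the rescaled convergence machinery entirely. The trade-off is informational: the paper's route actually identifies the sharp growth rate $u\sim c\,|x|^{2/(p+1)}$ and the precise limiting profile, whereas your argument gives only the cruder (but sufficient) fact that $\log u$ cannot simultaneously grow sublogarithmically and logarithmically.

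Two small points worth tightening in a final write-up. First, the cut-off $\eta_M$ as described vanishes only \emph{at} $|x|=R_0$, not on a neighbourhood of $\overline{B_{R_0}}$; to keep the mollified test function $L^2$-orthogonal to the $\phi_j$'s you should start the logarithmic ramp at a radius strictly larger than $R_0$, or equivalently take $R_0$ a bit larger than the radius actually containing the supports of the $\phi_j$'s. Second, when you pass from the upper bound on $\psi(\rho)$ to the bound on the annular average $\frac{1}{|A_r|}\int_{A_r}\log u$, you are averaging $2\pi s\,\psi(s)$ over $s\in[r,2r]$; since $|\psi(s)|=O(\sqrt{\log s})$ uniformly, this is indeed $O(\sqrt{\log r})$, but it is worth stating the coarea step explicitly so the comparison with the lower estimate is visibly between the same quantity. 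Neither is a gap — both are routine — but making them explicit would bring the proof to a fully rigorous form.
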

This result should have application to bifurcation theory to the
equation (\ref{eqn1}).

Here is the plan of our paper.
 In last two sections, we
discuss symmetry properties of related integral equations. We
prove Theorem \ref{thm3} in section \ref{GRAD} and prove Theorem
\ref{thm4} in section \ref{LOW}. In section \ref{UPP}, we prove
Theorem \ref{thm11}. Theorem \ref{thm6} is proved in section
\ref{COM}. The Liouville property is proved in section \ref{LIOU}.
The \ref{DISC}-th section is about some discussions to the
existence problem and upper bound for positive solutions to
(\ref{eqn1}). The proof of Theorem \ref{morse} and some bound
about positive solution with finite Morse index are given in
section \ref{FINI}. As we mentioned before, we shall discuss the
existence theory of positive solutions to (\ref{eqn1}) in section
\ref{EXIST}. Many consequences of Theorem \ref{thm4} will be
discussed in section \ref{CONS}.

  In the following, we shall use $C$ to denote different constants which depend
  only on $n$, $\tau$, $\mu$, and the solution $u$ in
  varying places.
\end{section}

\begin{section}{proof of Theorem \ref{thm3}}\label{GRAD}

In this section, we consider the gradient estimate of positive
solutions to the elliptic equation
\begin{equation}\label{mu2}
\Delta u=u^{\tau}, \; \; in \; \; \Omega\subset\mathbf{R}^n,
\end{equation}
where $\tau<0$. We shall use the maximum principle method
\cite{E94} (see also \cite{Ma06} and \cite{Ne}).

Recall the following \emph{basic formula}.
 Let $v$ be any smooth
function in $\mathbb{R}^n$. Then, we have
$$
\frac{1}{2}\Delta |\nabla v|^2=(\nabla \Delta v, \nabla v)+|D^2v|^2,
$$
which can be proved by an elementary calculation.

Let us begin with the gradient estimate for positive solution $u$ to
the following general elliptic equation:
$$
\Delta u=f(u), \; \; in\; \; \mathbf{R}^n.
$$
In our case we shall set $f(u)=u^{\tau}$. Set
$$
w=\log u.
$$
Then we have $$ \nabla w=u^{-1}\nabla u,
$$
and we can get
$$
\Delta w=-|\nabla w|^2+F(w),
$$
where
$$
F(w)=u^{-1}f(u)=e^{-w}f(e^w).
$$
In particular, $F(w)=e^{(-1+\tau)w}$ and $F'(w)=(-1+\tau)F(w)$ for
our case.

Assume $R_2>R>0$. Let $\phi$ be a cut-off function in $B_0(R_2)$
with $\phi=1$ on $B_0(R)$. Define
$$
P=\phi |\nabla w|^2,
$$
which is usually called the Harnack quantity for the solution $u$.

At the maximum point of $P$, we have the first order condition
$$ \nabla P=0,$$ which implies that
$$
\nabla |\nabla w|^2=-\phi^{-2}\nabla\phi P.
$$
and the second order condition $(\star)$:
$$
0\geq\Delta P=P_0(\phi)P+\phi \Delta |\nabla w|^2,
$$
where
$$
P_0(\phi)=\Delta\phi-2|\nabla\phi|^2\phi^{-2}.
$$

 Using the \emph{basic formula}, we have
$$
\phi \Delta |\nabla w|^2=2\phi |D^2w|^2+2\phi (\nabla\Delta w,\nabla
w).
$$
Note that
$$
\phi |D^2 w|^2\geq \frac{2\phi}{n}|\Delta
w|^2=\frac{2}{n\phi}(-P+\phi F(w))^2,
$$
and
$$
2\phi (\nabla\Delta w,\nabla w)\geq 2F'P-2\phi (\nabla |\nabla
w|^2,\nabla w)=2F'P-2\phi ^{-1}(\nabla\phi,\nabla w)P,
$$
and then, for any $\mu>0$,
$$
2\phi (\nabla\Delta w,\nabla w)\geq 2F'P-2\mu^{-1}\phi
^{-2}|\nabla\phi|^2P-\mu\phi^{-1}P^2.
$$

 Choose $\mu=\frac{1}{4n}$. Then
$$
2\phi (\nabla\Delta w,\nabla w)\geq 2F'P-4n\phi
^{-2}|\nabla\phi|^2P-\frac{1}{4n}\phi^{-1}P^2.
$$
Hence,
$$
\phi \Delta |\nabla w|^2\geq \frac{2}{n\phi}(-P+\phi F(w))^2+
2F'P-4n\phi ^{-2}|\nabla\phi|^2P-\frac{1}{4n}\phi^{-1}P^2.
$$
Then from $(\star)$ we have
$$
A(\phi, F')P\geq \frac{2}{n}(-P+\phi F)^2-\frac{1}{4n}P^2.
$$
Here $$ A(\phi, F')=4n\phi ^{-1}|\nabla\phi|^2-2\phi F'-\phi
P_0(\phi).
$$

If $P\leq 2\phi F$, then we have
$$
|\nabla w|^2\leq 2F.
$$
We remark that in this case, we have $$ |\nabla u|^2\leq
2u^2F=2uf(u).
$$

Otherwise, we have
$$
-P+\phi F\leq -P/2\leq 0
$$
and
$$
\frac{2}{n}(-P+\phi F)^2-\frac{1}{4n}P^2\geq \frac{1}{4n}P^2.
$$
Hence, we have
$$
P\leq 4nA(\phi, F').
$$
In conclusion, we have on $B_0(R)$,
$$
|\nabla w|^2\leq max(4nA(\phi, F'),2F),
$$
which implies the conclusion of Theorem 3.

We remark that our gradient estimate can be extended to other
elliptic equation like
$$
-\Delta u=u^{\tau}.
$$

\end{section}

\begin{section}{proof of Theorem {\ref{thm4}}}\label{LOW}

Our aim of this section is to set up an energy lower bound for the
positive solution $u$ to (\ref{mu2}) in every ball $B_R(x_0)$.
Without loss of generality, we take $x_0=0$. Let $R_2=2R_1>0$ and
let $\xi(|x|)$ be a cut-off function with its support in the ball
$B_{R_2}(0)$, $\xi(|x|)=1$ on $B_{R_1}(0)$, and
$$|\nabla \xi|\leq 4/R_1, \; \;  |\Delta\xi|\leq 100/R^2_1.$$

Multiplying both sides of (\ref{eqn1}) by $|x|^2\xi$ and integrating
over the ball $B_{R_2}(0)$, we then get
$$
\int u\Delta(|x|^2\xi)=\int f(u)|x|^2\xi.
$$
Note that the right side is bigger than
$$
\int_{B_{R_1}(0)} f(u)|x|^2;
$$
and the left side is less than
$$
C\int_{B_{R_2}(0)} u,
$$
where $C>0$ is an absolute constant depending only on the dimension
$n$. That implies that
$$
\int_{B_{R_1}(0)} f(u)|x|^2\leq C\int_{B_{R_2}(0)} u.
$$
Let $p>1$. Then we have
$$
(\int_{B_{R_1}(0)} f(u)|x|^2)^{1/p}(\int_{B_{R_1}(0)}
u)^{(p-1)/p}\leq C^{1/p}\int_{B_{R_2}(0)} u.
$$
Using Holder's inequality to the left side, we get
$$
\int_{B_{R_1}(0)}|x|^{2/p}f(u)^{\frac{1}{p}}u^{(p-1)/p}\leq
C^{1/p}\int_{B_{R_2}(0)} u.
$$
Choose $p=-\tau+1$. Then by our assumption on $f$, we have
$$
\int_{B_{R_1}(0)}|x|^{2/p}\leq C^{1/p}\int_{B_{R_2}(0)} u.
$$
It is elementary to compute that
$$
\int_{B_{R_1}(0)}|x|^{2/p}=\frac{p\omega_n}{2+p(n-1)}R_1^{n+\frac{2}{p}},
$$
$\omega_n$ is the volume of the unit ball $B_1(0)$. This implies the
conclusion of Theorem \ref{thm4}.

We remark that our argument above can also be used to smooth
positive solutions to the following equation
$$
 -\Delta u=u^{\tau},\ \ in \ \ {\Omega},
$$
with $\tau\leq0$.
\end{section}

\begin{section}{proof of Theorem {\ref{thm11}}}\label{UPP}

Assume that $u(x)\geq 1$ on $B_{R}(0)$ satisfies
$$
\Delta u=u^{\tau}.
$$
Then $$ 0<\Delta u\leq 1.
$$ Using the Gradient estimate in Theorem \ref{thm3}, we can easily
get that
$$
u(x)\leq u(0) e^{CR}
$$
for all $x\in B_R(0)$. Here $C$ is a uniform constant. However,
this estimate is too rough.

We now use the mean value property to do better. Fix $0<r=|x|\leq
R/4$. Since $u>0$ is subharmonic, on one hand, we have
$$
u(x)\leq \frac{1}{|B_r(x)|}\int_{B_r(x)} u.
$$
On the other hand, let
$$
w(y)=u(y)-\frac{|y|^2}{2n}.
$$
Then
$$
\Delta w=\Delta u-1\leq 0.
$$
Hence $w$ is super-harmonic, and for $R=2r$,we have
\begin{equation}\label{upper}
1=w(0)\geq \frac{1}{|B_R(0)|}\int_{B_R(0)}
u-\frac{2|x|^2}{n(n+2)}.
\end{equation}
Note that $B_r(x)\subset B_R(0)$ and
$$
\frac{1}{|B_R(0)|}\int_{B_R(0)} u\geq
\frac{|B_r(x)|}{|B_R(0)|}\frac{1}{|B_r(x)|}\int_{B_r(x)}u\geq
\frac{1}{2^n}\frac{1}{|B_r(x)|}\int_{B_r(x)}u\geq
\frac{u(x)}{2^n}.
$$
Hence, using (\ref{upper}), we have (\ref{bound2}):
$$
u(x)\leq C_n (|x|^2+1).
$$
Using the standard interpolation argument and $0<\Delta\leq 1$, we
get the gradient growth:
$$
|\nabla u(x)|\leq C_n (|x|+1).
$$
\end{section}

\begin{section}{Compactness result}\label{COM}

In this section, we study the point-wise lower bound of positive
solutions to the equation (\ref{eqn1}).

 We now prove Theorem \ref{thm6}, which is the{ \bf
Claim:} There is a positive constant $C=C(\Omega)$ such that for
every positive convex solution to (\ref{eqn1}) with $u\in
\mathbf{S}_K$, we have
$$
u(x)\geq C, \; \; for \; \; x\in \Omega.
$$
For otherwise, we have a sequence of positive convex solutions
$\{u_j\}$ and a sequence of points $\{x_j\}\subset \Omega$ such
that
$$
u_j(x_j)=min_{\Omega}u_j(x)\to 0.
$$
Choose
$$
\lambda_j=u_j(x_j)^{\frac{1-\tau}{2}}\to 0.
$$
Set
$$
v_j(x)=\lambda_j^{-\frac{2}{1-\tau}}u_j(x_j+\lambda_jx),
$$
$$\Omega_j:=\{x\in\mathbf{R}^n; x_j+\lambda_jx\in\Omega\},
$$
and $$ B_j=B_{R\lambda_j^{-1}}(0).
$$
  Then it is elementary to see that
$$
\Delta v_j=v_j^{\tau}, \; \;in\; \; \Omega_j
$$
and
$$
v_j(x)\geq v_j(0)=1.
$$
Let $$ \widehat{\Omega}=lim_j\Omega_j.$$

 Assume that $\lambda_jd(x_j,\partial \Omega)\to\infty$. Then
$\widehat{\Omega}=\mathbf{R}^n$ and
 by
our Harnack gradient estimate and the standard Lp theory, we can
extract a convergent subsequence in $C^2(B_r(0))$ for any $r>0$,
still denoted by $\{v_j\}$, with its limit $\bar{v}$ being a
positive convex function satisfying
$$
\Delta \bar{v}=\bar{v}^{\tau}, \; \; in \; \;\mathbf{R}^n,
$$
and
$$
\bar{v}(x)\geq 1=v(0).
$$

If $\lambda_jd(x_j,\partial \Omega)\leq C$ for some constant $C$,
then we have $\widehat{\Omega}=\mathbf{R}^n_+$ and we can get a
positive solution $v\in C^2(\mathbf{R}^n_+)$, i.e.,
$$
\Delta \bar{v}=\bar{v}^{\tau}, \; \; in \; \;\mathbf{R}^n_+,
$$
and
$$
\bar{v}(x)\geq 1=\bar{v}(0).
$$
However, both cases give us a contradiction by our Theorem
\ref{Prop2}. Then, we have proved Theorem \ref{thm6}.
\end{section}

\begin{section}{Liouville property}\label{LIOU}

We prove Theorem \ref{Prop2} in this section. Choose any positive
number $k>1$ and let $\Omega=\{x\in\mathbf{R}^n; u(x)\leq k\}$. By
our assumption, $\Omega$ is a bounded convex domain. Recall the
Pohozaev identity formally. Let $g(u)=-u^{\tau}$ and let
$$G(u)=\frac{1}{1+\tau}[k^{1+\tau}-u^{1+\tau}].$$
Multiplying by $x\nabla u$ to the equation
$$
-\Delta u=g(u),
$$
we have $$ 0=div(\nabla u(x\nabla u)-x\frac{|\nabla
u|^2}{2}+xG(u))+\frac{n-2}{2}|\nabla u|^2-nG(u).
$$
 Note that $1+\tau<0$. By
integrating the equation above over $\Omega$, we get
$$
\int_{\Omega}\frac{n-2}{2}|\nabla
u|^2+\frac{n}{1+\tau}[u^{1+\tau}-k^{1+\tau}]
+\frac{1}{2}\int_{\partial\Omega}\partial_{\nu}u(x\cdot\nabla
u)=0.
$$
Note that by multiplying by $u$ to the equation, we have
$$
\int_{\Omega}|\nabla u|^2-\int_{\partial\Omega}
u\partial_{\nu}u=\int_{\Omega}u^{1+\tau}.
$$
Hence, using $\partial_{\nu}u>0$ on the boundary $\partial\Omega$,
we have
$$
[\frac{n-2}{2}+\frac{n}{1+\tau}]\int_{\Omega}u^{1+\tau}<\frac{nk^{1+\tau}|\Omega|}{1+\tau}<0.
$$
By this we get a contradiction, and then Theorem \ref{Prop2} is
true. So we are done.

\end{section}

\begin{section}{Discussions}\label{DISC}

  Let $-1<\tau<0$. Given a positive data $\phi$ on the bounded smooth domain
  $\Omega$.
  Consider the boundary problem of positive solutions to (\ref{eqn1})
  on $\Omega$ with the boundary condition $u=\phi$ on $\partial\Omega$.
Let
$$
J(u)=\frac{1}{2}\int_{\Omega}|\nabla
u|^2+\frac{1}{1+\tau}\int_{\Omega}u^{1+\tau}
$$
on the space $\mathbf{A}=\{u\in H^1(\Omega); u=\phi \; on \;
\partial\Omega\}$. Since
$$
\int_{\Omega}u^{1+\tau}\leq
|\Omega|^{-\tau}(\int_{\Omega}u)^{1+\tau},
$$
we can get a non-negative minimizer of $J(\cdot)$ on $\mathbf{A}$.
For such a minimizer, one need to handle with how large for its
zero set. One may see \cite{Ph} and \cite{GW} for more.

We now discuss some regularity result for weak solution to
(\ref{eqn1}). We only need to get upper bound for positive weak
solutions to (\ref{eqn1}) for any $\tau<0$ by assuming a positive
lower bound.
 As in the proof of Theorem \ref{thm4}, we take $R>\rho>0$
and a cut-off function $\xi=\xi(|x|)$ such that $|\nabla\xi|\leq
\frac{4}{R-\rho}$, and $\xi=1$ on $B_{\rho}$. Then using $u(x)\geq
1$, we have as before that
$$\int u\Delta(|x|^2\xi)=\int u^{\tau}|x|^2\xi\leq \int |x|^2\simeq R^{n+2}.$$
Using $\Delta|x|^2=2n$ we have
$$
\int_{B_{\rho}} u\leq
\frac{AR^2}{(R-\rho)^2}\int_{T_{R,\rho}}u+BR^{n+2},
$$
where $A,B$ are uniform constants and $$ T_{R,\rho}=B_R-B_{\rho}.
$$

 We can also derive some other interesting bound without the
point-wise lower bound.

 Take a constant $\sigma>0$.  Then we have
$$
-\int\nabla u\nabla(u^{\sigma}\xi)=\int u^{\sigma+\tau}\xi.
$$
Using integration by part, we know that the left side is
$$
-\int\nabla u\nabla(u^{\sigma}\xi)=-\sigma\int u^{\sigma-1}|\nabla
u|^2\xi -\int u^{\sigma}\nabla u\nabla\xi.
$$
Then $$ \sigma\int u^{\sigma-1}|\nabla u|^2\xi+\int
u^{\sigma+\tau}\xi=-\int u^{\sigma}\nabla u\nabla\xi.
$$ Note that
$$
-\int u^{\sigma}\nabla u\nabla\xi=\frac{1}{1+\sigma}\int
u^{1+\sigma}\Delta\xi\leq
\frac{C}{(1+\sigma)(R-\rho)^2}\int_{T_{R,\rho}} u^{1+\sigma},
$$
where $T_{R,\rho}=B_{R}-B_{\rho}$. Hence, we have
\begin{equation}\label{en12}
\sigma\int_{B_R} u^{\sigma-1}|\nabla
u|^2+\int_{B_R}u^{\sigma+\tau}\leq
\frac{C}{(1+\sigma)(R-\rho)^2}\int_{T_{R,\rho}} u^{1+\sigma}.
\end{equation}
Let first consider two cases.

(1). If we choose $\sigma=-\tau$, then we get
$$ -\tau\int_{B_R} u^{-\tau-1}|\nabla u|^2+|B_1(0)|R^n\leq
\frac{C}{(1-\tau)(R-\rho)^2}\int_{T_{R,\rho}} u^{1-\tau}.
$$
 (2). If we send $\sigma\to 0$ in (\ref{en12}), then we get
$$
\int_{B_R}u^{\tau}\leq \frac{C}{(R-\rho)^2}\int_{T_{R,\rho}} u.
$$

In the following, we do iteration. Let $\sigma=-\tau+p$ in
(\ref{en12}). Then we have
\begin{equation}\label{en121}
(-\tau+p)\int_{B_R} u^{-\tau+p-1}|\nabla u|^2+\int_{B_R}u^{p}\leq
\frac{C}{(1-\tau+p)(R-\rho)^2}\int_{T_{R,\rho}} u^{1-\tau+p}.
\end{equation}
Then we have
$$
\frac{4(-\tau+p)}{(-\tau+p+1)^2}\int_{B_R}|\nabla(u^{\frac{-\tau+p+1}{2}})|^2+
\int_{B_R}u^{p}\leq
\frac{C}{(1-\tau+p)(R-\rho)^2}\int_{T_{R,\rho}} u^{1-\tau+p}.
$$
We now in the standard Nash-Moser iteration situation. Hence, for
any $0<q$, we have a uniform constant $C(q,n)$ such that
$$
\sup_{B_{\theta R}} u\leq
\frac{C(q,n)}{((1-\theta)R)^{n/q}}(\int_{B_R} u^q)^{1/q}.
$$
Once we have a upper bound, we can use the standard
Calderon-Zugmund Lp theory to conclude that $u$ is a smooth
solution.
\end{section}

\begin{section}{Finite Morse Index  solutions}\label{FINI}

Assume that $u$ is the positive solution  to (\ref{Morse}) with
finite Morse index $k$. Choose a large ball $B_R(0)$ which
contains the supports of all $\phi_j$'s. Let
$$T_r=B_{R+1+r}(0)-B_{R+1}(0).$$ Then we have
\begin{equation}\label{stab}
E(\phi)\geq 0
\end{equation}
for all $\phi\in C^{\infty}_0(T_r)$. Let $\xi$ be a smooth cut-off function with compact support in $ T_r$.  Let
$\phi=u^{-q}\xi$. Then we have the following stability condition
for any $\epsilon>0$,
\begin{eqnarray*}
&&(-\tau)\int u^{-2q-1+\tau}\xi^2\\
&\leq & \int
|u^{-1}D\xi-qu^{-q-1}\xi Du|^2 \\
&\leq &(1+\frac{|q|}{2\epsilon})\int
u^{-2q}|D\xi|^2+(q^2+2|q|\epsilon)\int u^{-2q-2}\xi^2|Du|^2.
\end{eqnarray*}

Using the weak form of the equation (\ref{Morse}) with the testing
function $\xi^2u^{-\beta}$,$\beta=2q+1>0$, we have
$$
\beta\int u^{-\beta-1}\xi^2|Du|^2\leq \int
u^{-\beta+\tau}\xi^2+2\int u^{-\beta}\xi|Du||D\xi|,
$$
and then we have, using the Cauchy-Schwartz inequality, for any
$\delta>0$,
$$
(\beta-2\delta)\int u^{-\beta-1}\xi^2|Du|^2\leq\int
u^{-\beta+\tau}\xi^2+\frac{1}{2\delta}\int u^{-\beta+1}|D\xi|^2.
$$
Inserting this into the stability condition we get
$$
(-\tau)\int u^{-2q-1+\tau}\xi^2\leq C(\epsilon,\delta, q)\int
u^{-2q}|D\xi|^2+(\frac{q^2+2|q|\epsilon}{2q+1-2\delta})\int
u^{-2q-1+\tau}\xi^2.
$$
Choose $-\frac{1}{2}<q <-\tau+\sqrt{\tau^2-\tau}$ and $\epsilon$, $\delta$ small
enough depending on $q$, we can have
$$
\frac{q^2+2|q|\epsilon}{2q+1-2\delta}< -\tau.
$$
Hence, for some constant $C(\tau, q)$, we have
$$
\int u^{-2q-1+\tau}\xi^2 \leq C(\tau, q)\int u^{-2q}|D\xi|^2.
$$
Take $q>0$ and replace $\xi$ by $\xi^{q+\frac{1-\tau}{2}}$ to get
$$
\int (\frac{\xi}{u})^{2q+1-\tau}\leq C(\tau, q)\int
(\frac{\xi}{u})^{2q}\xi^{-1-\tau}|D\xi|^2.
$$
Here $C(\tau,q)$ is another constant. Using the Young inequality
$$
ab\leq \frac{(\epsilon a)^{\alpha}}{\alpha}
+\frac{\alpha-1}{\alpha}(\frac{b}{\epsilon})^{\frac{\alpha}{\alpha-1}}
$$
with $\alpha=\frac{q+\frac{1-\tau}{2}}{q}$,
($\frac{\alpha}{\alpha-1}=\frac{2q}{1-\tau}+1$),
$a=(\frac{\xi}{u})^2$, and $b=(\xi^{-\frac{1+\tau}{2}}|D\xi|)^2$,
and choosing $\epsilon$ small, we get that
\begin{equation}\label{eqn20}
\int (\frac{\xi}{u})^{2q+1-\tau}\leq C(\tau,
q)\int(\xi^{-\frac{1+\tau}{2}}|D\xi|)^{\frac{4q}{1-\tau}+2}.
\end{equation}
Take $0<q<-\tau+\sqrt{\tau^2-\tau}$ such that
$$ n\leq\frac{4q}{1-\tau}+2
$$ and then we can find that
$$
\int_{T_r}(\frac{1}{u})^{2q+1-\tau}\leq C(R,\tau,q),
$$
for all $r>0$. Note that the restriction of $q$ is
$$
\frac{(n-2)(1-\tau)}{4}\leq q<-\tau+\sqrt{\tau^2-\tau},
$$
which implies that
$$
n\leq 2+\frac{ 4}{1-\tau} ( -\tau+ \sqrt{\tau^2 -\tau} )
$$
Set $q$ such that $n=\frac{4q}{1-\tau}+2$ and $p=2q+1-\tau$. Then
$p>n$, and we use the lower bound of $u$ to get that
$$
\int_{B_r(0)}(\frac{1}{u})^{p}\leq C(R,\tau,q),
$$
for any $r>0$. Note that
$1-\frac{n}{p}=1-\frac{2}{1-\tau}=-\frac{\tau+1}{1-\tau}$. Using
our equation we find that
$$
|\Delta u|\in L^p(\mathbf{R}^n).
$$
Using the standard $L^p$ estimate we find that
\begin{Thm} Let $u$ is a positive solution with finite Morse
index on $\mathbf{R}^n$. We now assume that $u(x)\geq u(0)=1$.
Then,
$$ |Du(x)|\leq C|x|^{1-\frac{n}{p}}=C|x|^{-\frac{\tau+1}{1-\tau}},
$$
and then we have the growth estimate
$$
u(x)\leq C(1+|x|^{-\frac{2\tau}{1-\tau}}).
$$
\end{Thm}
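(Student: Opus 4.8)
The plan is to bootstrap the a priori information recorded just above the statement. From the uniform estimate $\int_{B_r(0)}u^{-p}\le C(R,\tau,q)$ (for every $r>0$) one gets, as noted, that $\Delta u=u^{\tau}$ lies in $L^{p}(\mathbf{R}^{n})$ with $p=\tfrac{n(1-\tau)}{2}>n$; write $\Lambda:=\|u^{\tau}\|_{L^{p}(\mathbf{R}^{n})}$ and $p'=\tfrac{p}{p-1}$. Since $1-\tfrac{n}{p}=-\tfrac{\tau+1}{1-\tau}$ and $2-\tfrac{n}{p}=-\tfrac{2\tau}{1-\tau}$, the two exponents in the theorem are $1-\tfrac np$ and $2-\tfrac np$. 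Although the statement lists the gradient bound first, the natural order of proof (and the only one that works for this scheme) is to establish the pointwise growth bound for $u$ first and deduce the gradient bound from it.

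\emph{Step 1: growth of $u$.} Fix $r>0$ and let $g_{r}$ solve $\Delta g_{r}=u^{\tau}$ in $B_{r}(0)$, $g_{r}=0$ on $\partial B_{r}(0)$. Then $h_{r}:=u-g_{r}$ is harmonic in $B_{r}(0)$, equals $u\ge 1$ on the boundary, hence $h_{r}\ge 1>0$; and since $g_{r}=-\int_{B_{r}}G_{r}(\cdot,y)u^{\tau}(y)\,dy\le 0$ with $0\le G_{r}(x,y)\le c_{n}|x-y|^{2-n}$, also $u\le h_{r}$ in $B_{r}(0)$. H\"older's inequality gives $|g_{r}(x)|\le\|G_{r}(x,\cdot)\|_{L^{p'}(B_{r})}\|u^{\tau}\|_{L^{p}(B_{r})}\le C(n,p)\,r^{2-\frac np}\Lambda$, the integrability of $|x-\cdot|^{2-n}$ against the $p'$-power being exactly the condition $p>n/2$. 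Thus $h_{r}(0)=1+|g_{r}(0)|\le 1+Cr^{2-\frac np}$, and Harnack's inequality for the positive harmonic function $h_{r}$ on $B_{r}(0)$ gives, for $|x|\le r/2$, $\;u(x)\le h_{r}(x)\le C(n)h_{r}(0)\le C(1+r^{2-\frac np})$. Taking $r=3|x|$ yields
\[
u(x)\le C\bigl(1+|x|^{2-\frac np}\bigr)=C\bigl(1+|x|^{-\frac{2\tau}{1-\tau}}\bigr),\qquad x\in\mathbf{R}^{n}.
\]

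\emph{Step 2: gradient of $u$.} Fix $x_{0}$ with $|x_{0}|\ge1$, set $\rho=|x_{0}|/2$, and rescale $v(y)=u(x_{0}+\rho y)$ on $B_{1}$, so $\Delta v=g$ with $g(y)=\rho^{2}u^{\tau}(x_{0}+\rho y)$. A change of variables gives $\|g\|_{L^{p}(B_{1})}=\rho^{2-\frac np}\|u^{\tau}\|_{L^{p}(B_{\rho}(x_{0}))}\le\Lambda\,\rho^{2-\frac np}$, while Step 1 gives $\|v\|_{L^{\infty}(B_{1})}=\|u\|_{L^{\infty}(B_{\rho}(x_{0}))}\le C(1+(\tfrac32|x_{0}|)^{2-\frac np})\le C\rho^{2-\frac np}$. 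The interior $L^{p}$ (Calder\'on--Zygmund) estimate for $\Delta$ together with the Sobolev embedding $W^{2,p}(B_{1/2})\hookrightarrow C^{1}(B_{1/2})$, valid since $p>n$, yields $\|Dv\|_{C^{0}(B_{1/2})}\le C(\|v\|_{L^{p}(B_{1})}+\|g\|_{L^{p}(B_{1})})\le C\rho^{2-\frac np}$. Undoing the scaling, $|Du(x_{0})|=\rho^{-1}|Dv(0)|\le C\rho^{1-\frac np}=C|x_{0}|^{1-\frac np}=C|x_{0}|^{-\frac{\tau+1}{1-\tau}}$; for $|x_{0}|\le1$, $|Du|$ is bounded by compactness. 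Integrating along rays from the unit sphere re-derives the growth bound: $u(x)\le u\!\left(\tfrac{x}{|x|}\right)+\int_{1}^{|x|}\bigl|Du\bigl(s\tfrac{x}{|x|}\bigr)\bigr|\,ds\le C+C|x|^{2-\frac np}$.

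\emph{Main difficulty.} The obstacle is a genuine circularity: improving the gradient bound needs a better growth bound for $u$, and conversely. A direct rescaling of $u$ on $B_{\rho}(x_{0})$ with $\rho\sim|x_{0}|$ fed only with Theorem \ref{thm11} produces $\|v\|_{L^{\infty}(B_{1})}\sim|x_{0}|^{2}$ and thus recovers only $|Du(x_{0})|\le C|x_{0}|$, with no gain. Step 1 breaks the loop: splitting off the harmonic part $h_{r}$ of $u$ on balls centered at the origin lets one anchor $h_{r}$ at $0$, where $u(0)=1$, and spread this smallness by Harnack's inequality, the $L^{p}$-control on $\Delta u$ entering only through the crude, $u$-independent bound on $g_{r}$. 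Everything above requires $p>n$; for $n\ge 3$ this is exactly the admissible range of $\tau$ recorded before the statement, and for $n=2$ (where $p=1-\tau$) it forces $\tau<-1$, consistently with Theorem \ref{morse}.
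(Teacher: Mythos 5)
Your proof is correct, and it supplies the substance that the paper compresses into the single phrase ``using the standard $L^{p}$ estimate'': there is no argument in the paper beyond that sentence and the preceding observation that $|\Delta u|\in L^{p}(\mathbf{R}^{n})$ with $p>n$. The genuinely new ingredient is your Step~1 --- the harmonic replacement $u=h_{r}+g_{r}$ on $B_{r}(0)$, with the Green-potential part controlled by H\"older and $\|G_{r}(x,\cdot)\|_{L^{p'}(B_{r})}\leq C\,r^{2-n/p}$ (valid once $p>n/2$), and the Harnack inequality applied to the positive harmonic part $h_{r}$ anchored at the normalization $u(0)=1$. You correctly identify why this step cannot be skipped: a rescaled Calder\'on--Zygmund estimate on $B_{\rho}(x_{0})$ with $\rho=|x_{0}|/2$, fed only with the quadratic bound of Theorem~\ref{thm11}, gives $\|v\|_{L^{\infty}(B_{1})}\leq C|x_{0}|^{2}$ and hence only $|Du(x_{0})|\leq C|x_{0}|$, which is no improvement at all; the harmonic decomposition is precisely what lets the pointwise datum $u(0)=1$ enter a scale-invariant scheme. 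Consequently the growth estimate must be established \emph{before} the gradient estimate, the reverse of the order in which the theorem lists its two conclusions, and your Step~2 then carries out the $W^{2,p}\hookrightarrow C^{1}$ rescaling that the paper evidently has in mind. One inaccuracy in your closing remark: $p=\frac{n(1-\tau)}{2}>n$ is equivalent to $\tau<-1$ for every $n$, which is not the same as the admissible range $n\leq 2+\frac{4}{1-\tau}(-\tau+\sqrt{\tau^{2}-\tau})$ coming from the Morse-index computation (for $n=3$ or $n=4$ the latter allows $\tau$ well above $-1$); the paper's unqualified assertion ``Then $p>n$'' therefore tacitly assumes $\tau<-1$, and it would be better to state this restriction in the theorem.
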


In the case when the solution $u$ is stable, we can take
$T_r=B_r(0)$. In this case, we have the following result:
\begin{Thm}
There is no stable positive solutions to (\ref{Morse}) for $2\leq
n\leq 2+\frac{4}{1-\tau} (-\tau +\sqrt{\tau^2-\tau} )$.
\end{Thm}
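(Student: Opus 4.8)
The plan is to argue by contradiction. If a stable positive solution existed, the stability inequality would pin $1/u$ into $L^{p}(\mathbf{R}^{n})$ with \emph{vanishing} norm for a suitable $p>0$, which is impossible for a genuine (finite, positive, $C^{2}$) solution. Concretely, suppose $u>0$ solves (\ref{Morse}) on $\mathbf{R}^{n}$ and is stable, so that $E(\phi)\geq 0$ for every $\phi\in C^{2}_{0}(\mathbf{R}^{n})$. Then the computation already carried out above for finite Morse index solutions applies \emph{verbatim} with the annulus $T_{r}$ replaced by the full ball $B_{r}(0)$, since the stability inequality now holds on all of $\mathbf{R}^{n}$. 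In particular (\ref{eqn20}) is available for an arbitrary cut-off $\xi$ supported in a ball and for any $q$ with $0<q<-\tau+\sqrt{\tau^{2}-\tau}$:
\[
\int\Bigl(\frac{\xi}{u}\Bigr)^{2q+1-\tau}\ \leq\ C(\tau,q)\int\bigl(\xi^{-\frac{1+\tau}{2}}\,|D\xi|\bigr)^{\frac{4q}{1-\tau}+2}.
\]

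The first technical point is the choice of cut-off, which must also dispose of the sign ambiguity of the exponent $-\frac{1+\tau}{2}$ (negative when $-1<\tau<0$). I would take $\eta\in C^{\infty}_{0}(B_{2R}(0))$ with $\eta\equiv 1$ on $B_{R}(0)$, $0\leq\eta\leq 1$, $|D\eta|\leq C/R$, and set $\xi=\eta^{m}$ with $m$ an integer so large that $m\,\frac{1-\tau}{2}\geq 1$. Then
\[
\xi^{-\frac{1+\tau}{2}}|D\xi|\ =\ m\,\eta^{\,m\frac{1-\tau}{2}-1}\,|D\eta|\ \leq\ \frac{Cm}{R},
\]
and this quantity is supported in the annulus $B_{2R}(0)\setminus B_{R}(0)$, of volume $\leq CR^{n}$. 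Hence the right-hand side of (\ref{eqn20}) is bounded by $C(\tau,q,m)\,R^{\,n-\left(\frac{4q}{1-\tau}+2\right)}$, while the left-hand side dominates $\int_{B_{R}(0)}u^{-(2q+1-\tau)}$.

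Next I would tune the exponent. Choose $q$ with
\[
\frac{(n-2)(1-\tau)}{4}\ <\ q\ <\ -\tau+\sqrt{\tau^{2}-\tau};
\]
such a $q$ exists exactly when $n<2+\frac{4}{1-\tau}\bigl(-\tau+\sqrt{\tau^{2}-\tau}\bigr)$, and for $n\geq 2$ it is automatically positive. With this $q$ the power $n-\left(\frac{4q}{1-\tau}+2\right)$ is strictly negative, so letting $R\to\infty$ forces $\int_{B_{R}(0)}u^{-p}\to 0$ with $p=2q+1-\tau>0$. But $R\mapsto\int_{B_{R}(0)}u^{-p}$ is nondecreasing, so it vanishes identically; thus $u\equiv +\infty$ almost everywhere, contradicting that $u$ is finite and positive. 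This establishes the theorem for every $n$ with $2\leq n<2+\frac{4}{1-\tau}\bigl(-\tau+\sqrt{\tau^{2}-\tau}\bigr)$.

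The one place where this runs out of room — and what I expect to be the main obstacle — is the endpoint $n=2+\frac{4}{1-\tau}\bigl(-\tau+\sqrt{\tau^{2}-\tau}\bigr)$. There $\frac{(n-2)(1-\tau)}{4}$ equals $-\tau+\sqrt{\tau^{2}-\tau}$, so $q$ cannot be taken past the critical value while staying admissible, and for admissible $q$ the estimate only gives $\int_{B_{R}(0)}u^{-p}\leq C(\tau,q)\,R^{\theta(q)}$ with $\theta(q)=\frac{4}{1-\tau}\bigl(-\tau+\sqrt{\tau^{2}-\tau}-q\bigr)\downarrow 0$, while the constant $C(\tau,q)$ blows up as $q$ approaches the critical value (its denominator, essentially $(-\tau)-\frac{q^{2}}{2q+1}$ after sending the auxiliary parameters $\epsilon,\delta\to 0$, tends to $0$, and those parameters sit in the numerator). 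To recover the endpoint one would have to balance the blow-up of $C(\tau,q)$ against the decay $R^{-\theta(q)}$ by letting $q=q(R)$ tend to the critical value as $R\to\infty$, or bring in an independent argument; I regard closing this borderline case as the genuine gap in the method. It is worth noting, however, that $2+\frac{4}{1-\tau}\bigl(-\tau+\sqrt{\tau^{2}-\tau}\bigr)$ is generically irrational, so for all but exceptional $\tau$ there is no integer dimension sitting exactly at the endpoint, and the open range already exhausts the admissible $n$.
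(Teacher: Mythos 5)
Your proposal is correct on the open range $2\leq n<2+\frac{4}{1-\tau}(-\tau+\sqrt{\tau^2-\tau})$ and follows the same overall strategy as the paper: feed a ball cut-off into the estimate labelled (\ref{eqn20}) and let $R\to\infty$ to kill $\int u^{-p}$ over $B_R$. The real difference is the cut-off and the way the exponent $-\frac{1+\tau}{2}$ is handled. You take $\xi=\eta^m$ with $\eta$ a standard cut-off from $B_R$ to $B_{2R}$ and $m\frac{1-\tau}{2}\geq 1$, which forces $\xi^{-\frac{1+\tau}{2}}|D\xi|$ to be bounded by $Cm/R$ and yields polynomial decay $R^{\,n-(\frac{4q}{1-\tau}+2)}$ for any admissible $q>\frac{(n-2)(1-\tau)}{4}$. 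The paper instead takes the logarithmic cut-off $\xi(x)=2-\frac{\log|x|}{\log R}$ on $B_{R^2}\setminus B_R$, tunes $q$ so that $\frac{4q}{1-\tau}+2$ equals $n$ exactly, and reads off the slower decay $\frac{C}{(\log R)^{n-1}}$. Both suffice. Your version is actually more careful on one point: with the logarithmic cut-off, $\xi^{-\frac{1+\tau}{2}}$ is unbounded near $\partial B_{R^2}$ when $-1<\tau<0$, and the resulting factor $\int_1^2(2-s)^{-n(1+\tau)/2}\,ds$ diverges once $n(1+\tau)\geq 2$, a regime that can occur inside the stated range of $n$; the paper does not address this, whereas your $\eta^m$ device removes the issue at the source.

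Your diagnosis of the endpoint is also exactly right, and it applies equally to the paper's own proof. The paper needs $q=\frac{(n-2)(1-\tau)}{4}$ so that $\frac{4q}{1-\tau}+2=n$, and admissibility requires the strict inequality $q<-\tau+\sqrt{\tau^2-\tau}$; when $n=2+\frac{4}{1-\tau}(-\tau+\sqrt{\tau^2-\tau})$ this forces $q$ to sit exactly at the forbidden value, and the constant $C(\tau,q)$ (whose denominator is essentially $(-\tau)-\frac{q^2}{2q+1}$) blows up. So the paper's argument, as written, does not close the borderline case either; the theorem as stated with ``$\leq$'' is not actually established at the endpoint, and your remark about that value being generically non-integer is the honest way to record what the method delivers.
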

\begin{proof} Assume not.
Let $\xi$ be a cut-off function such that $\xi=1$ on the ball
$B_R(0)$,
$$
\xi(x)=2-\frac{log|x|}{log R},
$$
for $x\in B_{R^2}(0)-B_R(0)$,and $\xi=0$ outside $B_{R^2}(0)$.
Then we get from the estimate (\ref{eqn20}) that
$$
\int_{B_R(0)} \frac{1}{u^n}\leq \frac{C}{(log R)^{n-1}}\to 0,
$$
which is impossible.
\end{proof}

We remark that the special case when $\tau=-1$ has been obtained
in \cite{Mea}.

We now prove Theorem \ref{morse}.
\begin{proof}
Using the test function $ \phi=\xi$ in (\ref{stab}), we obtain
that
\begin{equation}
\label{u1}
\int_{B_R (0)} u^{\tau-1} \leq C
\end{equation}
where $C$ is independent of $R>1$. In the following, we shall let
$\tau=-p$, which is often used in the literature about
super-linear elliptic equations.

We now perform the following scaling:
\begin{equation}
\label{sc1}
 u(r, \theta) = A_p r^{\frac{2}{p+1}} v(t, \theta), t = \log r, r =|x|
\end{equation}
where $A_p =(\frac{p+1}{2})^{ \frac{2}{p+1}}$.

Thus we obtain that $v (t, \theta)$ satisfies
\begin{equation}
\label{veqn}
  v_{tt} +\frac{4}{p+1} v_t + v_{\theta \theta}
  + \frac{4}{ (p+1)^2} v - \frac{4}{ (p+1)^2 v^p}  =0, t \in (-\infty, +\infty), \ \theta \in S^1
\end{equation}

We first claim

\begin{equation}
\label{cl1}
  v(t, \theta) \geq C \ \ \mbox{for}\ t >2.
\end{equation}

In fact, from (\ref{u1}) and (\ref{veqn}), we obtain that
\begin{equation}
\label{v2}
\int_{t}^{t+1} v^{-p-1} (t, \theta) ds d \theta \to 0, \ \mbox{as} \ t \to +\infty.
\end{equation}
Let $m= \frac{1}{v^{p+1}}$.  Then it is easy to see that $m$ satisfies
\begin{equation}
m_{tt}+\frac{4}{p+1} m_t + m_{\theta \theta} + C_1 m^2 \geq 0
\end{equation}

Let us fix a point $ {\bf x}_0=(t_0, \theta_0) \in (1, \infty) \times S^1$.
Set $\hat{m}= e^{\frac{2}{p+1} (t_0-t)} m$. Then $\hat{m}$ satisfies
\begin{equation}
\hat{m}_{tt}+\hat{m}_{\theta \theta} + C_2 \hat{m}^2 \geq 0
\end{equation}
for $ (t, \theta) \in [ t_0-1, t_0+1] \times S^1$.

By Lemma 2.2 of \cite{hw} (see also Theorem 1.7 in \cite{And}),
 we see that there exists $\eta_0>0$ such that for any $r>0$ if $\int_{B_r {\bf x}_0 } \hat{ m} dx \leq \eta_0$,
then
$$\hat{ m}(t, \theta) \leq \frac{C}{r^2} \int_{B_r ({\bf x}_0} {\hat{ m}} (x) dx \;\;
\mbox{for $ (t, \theta) \in B_{r/2} ({\bf x}_0) $}$$

Choosing $t_0>8$ large enough so that
\begin{equation}
\label{v3} \int_{t}^{t+1} v^{-p-1} (t, \theta) d \theta < e^{-8}
\eta_0, \ \mbox{for} \ t >\frac{t_0}{2}.
\end{equation}

Then
\begin{equation}
\label{v3}
\int_{t}^{t+1} \hat{m} (t, \theta) ds d \theta < \frac{1}{2} \eta_0, \ \mbox{for} \ t >\frac{t_0}{2}.
\end{equation}
Thus
$$ \hat{m} (t, \theta) \leq C$$
for $(t, \theta) \in B_{\frac{1}{2}} ( (t_0, \theta_0))$, which is equivalent to that $ v(t, \theta) \geq C$.

(\ref{cl1}) implies that  $ v(t, \theta) \geq C$. On the other
hand, it is easy to see that by the Harnack inequality, $ v(t,
\theta) \leq C$. By the results of L. Simon \cite{si}, $ v(t,
\theta) \to v(\theta)$, where $ v(\theta)$ satisfies
\begin{equation}
v_{\theta \theta} +\frac{4}{ (p+1)^2} v -\frac{1}{  v^p} =0, \ \mbox{$v$ \ is $2\pi$-periodic}.
\end{equation}
By Theorem 2.1 of \cite{cw}, $v(\theta) \equiv constant$ if $ p
\not = 3$, and $  v(\theta)= (\lambda \cos^2 \theta + \lambda^{-1}
\sin^2 \theta)^{1/2}$ for $p=3$.  This implies
\begin{equation}
\lim_{r \to +\infty} |x|^{-\frac{2}{p+1}} u(x) \geq \frac{2\mu}{p}
\end{equation}
for some $ \mu>0$.

Next, by explicitly solving the equation (it is an Euler equation), one finds
that any non-trivial solution of
\begin{equation}
\label{5.3}
 -k''-\frac{1}{r}k'-(\mu/r^2)k=0
\end{equation}
has infinitely many (and unbounded) positive zeros if $\mu>0$. (Note
that under the changes: $r=e^s$ and ${\tilde k}(s)=k(r)$, we see
that ${\tilde k}(s)$ satisfies the equation
$${\tilde k}''(s)+\mu {\tilde k}(s)=0. $$
It is easily seen that ${\tilde k}(s)$ has infinitely many positive
zeroes for any $\mu>0$.) Thus, we can easily deduce that $q$ has
infinitely many positive zeros. Our claim holds.

We denote the zeroes of $k$ as $ 0<r_1 <r_2 <...< r_k <...$
where $r_k \to +\infty$ as $k \to +\infty$. Let $k_0$ be such that
\begin{equation}
 \frac{p}{u^{p+1}} \geq \frac{2 \mu}{ r^2}, r>r_k, k \geq k_0
\end{equation}

We now in the position to complete the proof of Theorem \ref{morse}.
Let $N>0$ be fixed and $ i \geq k_0$. Let $h_i$ be the function defined to be
$k(|x|)$ for $|x|$ between the $i$th and $(i+1)$th the
zeros of $k$ and to be zero otherwise. Then $h_i \in
H^1(R^2)$, $h_i$ are orthogonal (in $L^2(R^2)$ or $H^1 (R^2)$)
and by multiplying (\ref{5.3}) by $h_i$ and integrating between
these zeros we see that
$$Q(h_i)=\int_{R^2} \Big[ |\nabla h_i|^2-\frac{p}{u^{p+1}} h_i^2 \Big]
$$
\[ =\int_{R^2} \Big[ \frac{\mu}{r^2}   -\frac{p}{u^{p+1}}\Big] h_i^2\]
is strictly negative at each $h_i$. Hence the span of $h_i$ is an
$(N-1)$-dimensional subspace of $C_0^\infty (R^2)$ such that
$Q(h)<0$. Since
$h_i$ has compact support it follows easily that there is an
$(N-1)$-dimensional subspace of $H^1 (R^2)$ such that
$$\int_{R^2} [|\nabla h(y)|^2-\frac{p}{u^{p+1}} h^2 ] <0
$$
and hence the Morse index of $u$ must be at least $N$.
Since $N$ is arbitrary, the Morse index of $u$ is infinity, a contradiction to our assumption.

\end{proof}

\end{section}

\begin{section}{Existence theory}\label{EXIST}
We now consider the existence problem of the problem (\ref{eqn1}).
One can easily to find the radial solutions to (\ref{eqn1}) in the
whole space (see Theorem 3.2 in \cite{Mea} for the case when
$\tau=-1$). In general, we have

\begin{Prop}\label{thm22} Assume $(-1\not=)\tau\leq 0$,
and let $\Omega\subset \mathbf{R}^n$ be a bounded smooth domain.
Given any smooth positive boundary data $\phi$. Assume that
$\underline{u}$ is a sub-solution to (\ref{eqn1}) with
$\underline{u}\leq \phi$ on $\partial\Omega$. Then there is smooth
positive solution to (\ref{eqn1}) with boundary data $\phi$.
\end{Prop}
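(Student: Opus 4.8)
The plan is to run the classical monotone iteration (method of sub- and supersolutions); the only delicate point is that $f(s)=s^{\tau}$ is singular at $s=0$, so the iterates must be kept uniformly bounded away from $0$, and this is exactly what the hypothesised subsolution $\underline u$ guarantees.

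First I would produce an ordered pair of barriers. Let $h\in C^{\infty}(\overline\Omega)$ be the harmonic extension of $\phi$, i.e. $\Delta h=0$ in $\Omega$ and $h=\phi$ on $\partial\Omega$; since $\phi>0$ the maximum principle gives $h>0$ on $\overline\Omega$, hence $\Delta h=0\le h^{\tau}$, so $h$ is a supersolution. On the other hand $\underline u$ is subharmonic, since $\Delta\underline u\ge\underline u^{\tau}>0$, and $\underline u\le\phi=h$ on $\partial\Omega$, so $\underline u\le h$ in $\Omega$ by comparison. Put $m:=\min_{\overline\Omega}\underline u>0$ and $M:=\max_{\overline\Omega}h$; on the interval $[m,M]$ the map $s\mapsto s^{\tau}$ is smooth and, for $\tau<0$, strictly decreasing.

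Next I would set up the iteration. Let $u_{0}:=h$, and inductively let $u_{k+1}$ solve the linear problem $\Delta u_{k+1}=u_{k}^{\tau}$ in $\Omega$, $u_{k+1}=\phi$ on $\partial\Omega$. Exploiting that $s\mapsto s^{\tau}$ is non-increasing, an easy induction gives $\underline u\le u_{k+1}\le u_{k}\le h$ on $\overline\Omega$: from $u_{k}\le u_{k-1}$ one gets $\Delta(u_{k+1}-u_{k})=u_{k}^{\tau}-u_{k-1}^{\tau}\ge 0$ with vanishing boundary data, hence $u_{k+1}\le u_{k}$; and $\Delta(u_{k+1}-\underline u)\le u_{k}^{\tau}-\underline u^{\tau}\le 0$ with nonnegative boundary data, hence $u_{k+1}\ge\underline u$. (For $\tau=0$ the equation is already linear and is solved in one step.) In particular $m\le u_{k}\le M$ for every $k$, so the right-hand sides $u_{k}^{\tau}$ stay in a fixed compact subset of $(0,\infty)$ and the scheme never degenerates. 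The sequence $(u_{k})$ is monotone and uniformly bounded, so it converges pointwise to some $u$ with $\underline u\le u\le h$. Because the $u_{k}^{\tau}$ are uniformly bounded in $L^{\infty}$, interior and boundary elliptic estimates plus the Lipschitz bound for $s\mapsto s^{\tau}$ on $[m,M]$ give a uniform $C^{2,\alpha}(\overline\Omega)$ bound for $(u_{k})$; hence the convergence holds in $C^{2}$ and $u$ solves $\Delta u=u^{\tau}$ in $\Omega$, $u=\phi$ on $\partial\Omega$, with $u\ge m>0$. Since $u$ is bounded away from $0$, $u^{\tau}$ is as regular as $u$, and the standard Schauder bootstrap (using that $\Omega$ and $\phi$ are smooth) yields $u\in C^{\infty}(\overline\Omega)$.

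I expect the only genuine obstacle to be the singular nonlinearity: one must be sure that neither the iterates nor the limit ever reach $0$, which is controlled entirely by the lower barrier $\underline u$ and the comparison principle — hence the hypothesis of an ordered subsolution is essential. The restriction $\tau\neq -1$ plays no role in this argument (the case $\tau=-1$ goes through verbatim) and is presumably kept only to avoid overlap with the results of \cite{Mea}; one may also note, as a byproduct, that since every solution lying above $\underline u$ is subharmonic and therefore $\le h$, the solution produced above is in fact the maximal solution dominating $\underline u$.
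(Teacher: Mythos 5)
Your proof is correct, and it is a genuinely different route from what the paper actually writes out. The paper does mention in one sentence that the constant $\overline u = M$ is a supersolution and that ``the standard super-sub solution method'' would work, but it then sets that aside and gives a \emph{variational} argument instead: it minimizes
$$I(u)=\frac{1}{2}\int_{\Omega}|Du|^{2}-\frac{1}{1+\tau}\int_{\Omega}|u|^{1+\tau}$$
over the closed convex set $\mathbf A=\{u\in H^{1}(\Omega):\ \underline u\le u\le M,\ u=\phi\ \text{on}\ \partial\Omega\}$, using the Lipschitz estimate $|\int u^{1+\tau}-\int w^{1+\tau}|\le C\int|u-w|$ (valid on the ordered set where $u,w\ge m>0$) to get weak lower semicontinuity and compactness, and then invokes \cite{St} to identify the minimizer with a solution. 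You instead carry out the monotone iteration $\Delta u_{k+1}=u_k^{\tau}$, $u_{k+1}=\phi$ on $\partial\Omega$, starting from the harmonic extension $h$ of $\phi$ as supersolution and sandwiching the iterates between $\underline u$ and $h$. Both work, and for essentially the same reason: the ordering $\underline u\le u\le M$ keeps the nonlinearity $s\mapsto s^{\tau}$ away from its singularity. The variational route has the advantage (noted by the authors) that the minimizer on $\mathbf A$ is automatically a \emph{stable} solution; your iteration route has the advantage that it does not need the functional $I$, and you correctly point out that this makes the hypothesis $\tau\ne -1$ superfluous in your argument — whereas it is essential for the paper's functional, which has the factor $\frac{1}{1+\tau}$ and degenerates to a logarithm at $\tau=-1$. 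Two small harmless deviations: you use $h$ rather than a large constant as supersolution (either works; $\Delta M=0\le M^{\tau}$ since $M>0$), and you give the regularity bootstrap in full, which the paper elides.
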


We remark that in the case when $\tau=-1$, an existence result has
been discussed in \cite{Mea} by degree argument. Since the proof
of Theorem \ref{thm22} is simple, we give it here.
\begin{proof} Choose a large constant $M$ such that
$\overline{u}=M$ is a super-solution. Then one can use the
standard super-sub solution method to get a positive solution.

We give here the variational method. Let
$$\mathbf{A}=\{u\in
H^1(\Omega);\underline{u}\leq u\leq M \; in \; \Omega, u=\phi \;
in\;
\partial\Omega\}.$$
 Define
 \begin{equation}\label{energy}
I(u)=\frac{1}{2}\int |Du|^2-\frac{1}{1+\tau}\int |u|^{1+\tau}.
\end{equation}
It is easy to see that $I(\cdot)$ is bounded from below on the
closed convex set $\mathbf{A}$. Since for any $u,w\in\mathbf{A}$,
$$
|\int u^{1+\tau}-w^{1+\tau}|\leq C\int |u-w|.
$$
(We get this by mean value theorem in Calculus). Then we can use
the Sobolev compactness imbedding theorem to get a minimizer $u$
of the functional $I(\cdot)$ in the set $\mathbf{A}$, which is the
solution to (\ref{eqn1}) with the boundary data $\phi$ (see
\cite{St}).
\end{proof}

The advantage of variational methods is that one may prove that
the minimizer on the class $\mathbf{A}$ is a stable solution in
the usual sense. Since we shall not use this fact, we shall not
discuss it. The nature question is how to find a sub-solution on a
bounded domain. The usual way is to use one-dimensional (or any
lower dimensional) solution or radial solution on the whole space.
One can also choose a large ball containing the bounded domain,
and solve the equation on the ball to get radial solutions on the
ball. Such radial solutions are the sub-solutions to the equation
on the original domain if the boundary value of the radial
solutions on the domain are less than the given boundary data
$\phi$.

In particular, as an application of Theorem \ref{thm22}, we have
\begin{Coro}
 Assume $(-1\not=)\tau\leq 0$,
and let $\Omega\subset \mathbf{R}^n$ be a bounded smooth domain.
Given any positive smooth boundary data $\phi$ in
$\partial\Omega$. Assume that there is a radial solution $u(r)$ in
lower dimensional space $\mathbf{R}^k$ ($k<n$) or the whole space
$\mathbf{R}^n$
 such that
$\phi(x)>u(|x|)$ on $\partial\Omega$. Then there is smooth
positive solution to (\ref{eqn1}) with boundary data $\phi$.
\end{Coro}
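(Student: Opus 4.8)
The plan is to deduce the corollary directly from Proposition \ref{thm22}: by that result it suffices to exhibit a sub-solution $\underline u$ of (\ref{eqn1}) on $\Omega$ with $\underline u\le\phi$ on $\partial\Omega$. In both of the cases allowed by the hypothesis I would simply take
\[
\underline u(x)=u(|x|),
\]
regarded as a radial function on all of $\mathbf{R}^n$. Since $u>0$ solves a semilinear equation with right-hand side $u^{\tau}$, which is smooth where $u>0$, the function $u$ (hence $\underline u$) is smooth, with $u'(0)=0$; in particular $\underline u\in C^2(\overline\Omega)$ and $\underline u>0$ on $\overline\Omega$. The boundary inequality $\underline u\le\phi$ on $\partial\Omega$ is precisely the standing assumption $u(|x|)<\phi(x)$, so the only thing left to verify is the differential inequality $\Delta\underline u\ge\underline u^{\tau}$ on $\Omega$.

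If $u$ is a radial solution of (\ref{eqn1}) on the whole space $\mathbf{R}^n$, then $\underline u$ is an exact solution of (\ref{eqn1}) on $\mathbf{R}^n$, hence on $\Omega$, so nothing further is needed and Proposition \ref{thm22} applies immediately.

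Now suppose instead that $u$ solves $u''+\frac{k-1}{r}u'=u^{\tau}$ for $r>0$ (the radial form of (\ref{eqn1}) in $\mathbf{R}^k$) with $1\le k<n$ and $u'(0)=0$. The key observation is that $u$ is increasing: since $u>0$ we have $(r^{k-1}u')'=r^{k-1}u^{\tau}>0$, and $r^{k-1}u'$ vanishes at $r=0$, so $u'(r)>0$ for all $r>0$. Consequently, using the $\mathbf{R}^k$ equation, for $x\neq 0$,
\[
\Delta_{\mathbf{R}^n}\underline u(x)=u''(r)+\frac{n-1}{r}u'(r)=u(r)^{\tau}+\frac{n-k}{r}u'(r)\ \geq\ \underline u(x)^{\tau},
\]
since $n>k$ and $u'\geq 0$; as both sides are continuous on $\mathbf{R}^n$ (because $u>0$), the inequality persists at the origin as well (where in fact $\Delta\underline u(0)=\tfrac{n}{k}u(0)^{\tau}\geq u(0)^{\tau}$). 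Thus $\underline u$ is a sub-solution of (\ref{eqn1}) on $\mathbf{R}^n$, in particular on $\Omega$, with $\underline u\le\phi$ on $\partial\Omega$, and Proposition \ref{thm22} yields the desired positive smooth solution with boundary data $\phi$.

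I expect the only substantive point to be the monotonicity of the lower-dimensional radial solution: it is exactly what makes the extra term $\frac{n-k}{r}u'$ nonnegative, so that $u(|x|)$ is a \emph{sub}-solution rather than a super-solution of (\ref{eqn1}) in the higher-dimensional ambient space; were the sign reversed the reduction would break down. The smoothness of $u$ at the origin (with $u'(0)=0$) is standard for radial solutions of semilinear equations with positive right-hand side, and everything else is routine bookkeeping around Proposition \ref{thm22}.
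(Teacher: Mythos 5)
Your proof is correct and follows the same route as the paper: take $\underline{u}(x)=u(|x|)$ as the sub-solution required by Proposition \ref{thm22}. The paper merely asserts this in one line, whereas you supply the verification (monotonicity of the lower-dimensional radial solution making the extra $\frac{n-k}{r}u'$ term nonnegative) — a detail the paper leaves implicit but which is indeed the crux of why $u(|x|)$ is a sub-solution rather than a super-solution in $\mathbf{R}^n$.
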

\begin{proof} Here we need only to use $u(r)$ as a sub-solution to
(\ref{eqn1}) on the domain $\Omega$ in Proposition \ref{thm22}.
\end{proof}

Although the argument in the proof of this Corollary is simple, it
can be used to study the existence result of positive solutions
for a large class of singular elliptic partial differential
equations such as
$$
\Delta u+ au \log u=0
$$
with Dirichlet boundary data. One may see \cite{G93} and
\cite{Ma06} for related results.

\end{section}

\begin{section}{Consequences of Theorem \ref{thm4}}\label{CONS}

 As an easy consequence
of Theorem \ref{thm4}, we have
\begin{Prop}\label{Prop11}
Assume $\tau\leq 0$ and $\Omega\subset \mathbf{R}^n$. Let $f$ be
as in Theorem \ref{thm4} above. Let $u\in C^0(\Omega)$ be a
positive weak solution to the equation to (\ref{eqn10}) in
$\Omega$.
  Then for any $R>0$ and $x_0\in \Omega$ ( with $B_R(x_0)\subset \mathbf{R}^n$),
  we have absolute constant $C(n,\tau)$ such that
 \begin{equation}\label{max}
max_{\partial B_R(x_0)}u=sup_{B_R(x_0)}u\geq
C(n,\alpha)R^{\frac{2}{1-\tau}}.
\end{equation}
\end{Prop}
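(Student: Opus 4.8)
The plan is to obtain Proposition \ref{Prop11} as an immediate combination of two facts already at our disposal: the subharmonicity of $u$ (which gives the maximum principle on balls, hence the equality $\max_{\partial B_R(x_0)}u=\sup_{B_R(x_0)}u$), and the $L^1$ lower bound of Theorem \ref{thm4} (which, paired with the trivial upper bound $\int_{B_R(x_0)}u\le |B_R(x_0)|\sup_{B_R(x_0)}u$, yields the pointwise lower bound). Since $f$ is positive, $\Delta u=f(u)\ge 0$ in the distributional sense, so the continuous function $u$ is subharmonic in $\Omega$ and therefore enjoys the sub–mean value property.

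First I would establish the identity $\max_{\partial B_R(x_0)}u=\sup_{B_R(x_0)}u$. For any $0<R'<R$ the function $u$ is continuous on the compact set $\overline{B_{R'}(x_0)}\subset\Omega$ and subharmonic there, so it attains its maximum over $\overline{B_{R'}(x_0)}$ on the sphere $\partial B_{R'}(x_0)$; since $B_{R'}(x_0)$ is dense in its closure and $u$ is continuous, this gives $\sup_{B_{R'}(x_0)}u=\max_{\partial B_{R'}(x_0)}u$. Letting $R'\uparrow R$ and using continuity of $u$ near $\partial B_R(x_0)$ gives the stated equality; in particular the supremum of $u$ over the open ball is realized on the sphere.

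Next I would combine the two bounds. Theorem \ref{thm4} (applied with this $f$) gives
\[
\int_{B_R(x_0)}u\ \ge\ C(n,\tau)\,R^{\,n+\frac{2}{1-\tau}},
\]
while bounding $u$ by its supremum on the ball gives
\[
\int_{B_R(x_0)}u\ \le\ |B_R(x_0)|\,\sup_{B_R(x_0)}u\ =\ \omega_n R^n\,\sup_{B_R(x_0)}u .
\]
Comparing these and dividing by $\omega_n R^n$ yields $\sup_{B_R(x_0)}u\ge \frac{C(n,\tau)}{\omega_n}R^{\frac{2}{1-\tau}}$, which is (\ref{max}) after relabelling the constant and invoking the equality from the previous step.

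The argument is essentially a one-line corollary, so there is no real obstacle; the only points deserving a little care are the passage from the hypothesis ``$B_R(x_0)\subset\Omega$'' to assertions about $u$ on the \emph{closed} ball (handled by the exhaustion $R'\uparrow R$ above) and the remark that a continuous distributional subsolution genuinely satisfies the maximum principle, which is classical.
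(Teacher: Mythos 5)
Your proposal is correct and matches the paper's own one-line argument, which simply cites the $L^1$ lower bound (\ref{L1}) together with the fact that $u$ is subharmonic so its maximum is attained on the boundary. You have merely spelled out the trivial inequality $\int_{B_R}u\le \omega_n R^n\sup_{B_R}u$ and the exhaustion $R'\uparrow R$, which are exactly the steps the authors left implicit.
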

The proof of this is direct by using our $L1$ lower bound
(\ref{L1}) since $u$ is subharmonic and the maximum occurs only at
boundary point.

Proposition \ref{Prop11} immediately implies the following

\begin{Coro}
Assume $\tau\leq0$ and assume that $\Omega\subset \mathbf{R}^n$ is
an open subset in $\mathbf{R}^n$. Let
$f:\mathbf{R}\to\mathbf{R}_+$ be a positive function such that
$$
s^{\frac{\tau}{\tau-1}}f(s)^{\frac{1}{1-\tau}}\geq C_0, \; for \;
s>0
$$
for some constant $C_0$. Then there is a positive constant
$C=C(\Omega)$ such that if the positive boundary data $\phi\leq C$
on $\partial\Omega$, the Dirichlet problem to the equation
(\ref{eqn10}) in $\Omega$ with $u=\phi$ on the boundary
$\partial\Omega$ has no nontrivial positive weak $C^0$-solution.
\end{Coro}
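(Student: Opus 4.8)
The plan is to deduce the statement directly from the $L^1$ lower bound of Theorem~\ref{thm4} in the packaged form of Proposition~\ref{Prop11}, combined with the maximum principle. The hypothesis $s^{\tau/(\tau-1)}f(s)^{1/(1-\tau)}\ge C_0$ is exactly the one required there, so Proposition~\ref{Prop11} is available: for every ball $B_R(x_0)\subset\Omega$ one has $\sup_{B_R(x_0)}u\ge C(n,\tau)R^{\frac{2}{1-\tau}}$ for any positive weak $C^0$-solution of (\ref{eqn10}). On the other hand, since $f>0$ we have $\Delta u=f(u)>0$, so $u$ is subharmonic; as $u\in C^0$, the continuous representative satisfies the sub-mean-value inequality and hence the maximum principle, giving $\sup_\Omega u=\max_{\partial\Omega}\phi$ on bounded $\Omega$ (and in particular $\sup_\Omega u\le\max_{\partial\Omega}\phi$ in general). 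The result is the clash of these two bounds once $\phi$ is small.

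Concretely, I would let $\rho(\Omega)>0$ be the inradius of $\Omega$ (the supremum of radii of balls contained in $\Omega$). For each $R<\rho(\Omega)$ pick $x_0$ with $B_R(x_0)\subset\Omega$; Proposition~\ref{Prop11} then forces
$$\sup_\Omega u\ \ge\ \sup_{B_R(x_0)}u\ \ge\ C(n,\tau)\,R^{\frac{2}{1-\tau}}.$$
Because $\tau\le0$ we have $\tfrac{2}{1-\tau}>0$, so $R\mapsto R^{2/(1-\tau)}$ is increasing and letting $R\uparrow\rho(\Omega)$ yields $\sup_\Omega u\ge C(n,\tau)\,\rho(\Omega)^{\frac{2}{1-\tau}}$. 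Now set $C=C(\Omega):=\tfrac12\,C(n,\tau)\,\rho(\Omega)^{\frac{2}{1-\tau}}>0$. If a positive weak $C^0$-solution with $\phi\le C$ on $\partial\Omega$ existed, the maximum principle would give $\sup_\Omega u\le\max_{\partial\Omega}\phi\le C$, contradicting $\sup_\Omega u\ge 2C>C$. Hence no nontrivial positive solution exists, which is the claim.

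I do not expect a serious obstacle here; the argument is essentially a one-line corollary. The only points deserving care are: (i) justifying the maximum principle for a merely continuous weak (not $C^2$) solution — this is standard, since $\Delta u\ge0$ in the distributional sense forces the sub-mean-value inequality for the continuous representative; and (ii) the unbounded case, where one should tacitly assume the relevant inradius is finite and $u$ is bounded, or else read the statement on bounded subdomains (if $\rho(\Omega)=\infty$, the same estimate already rules out any bounded positive solution outright). If one wants the sharp threshold, $C(\Omega)$ can be taken to be any constant strictly below $C(n,\tau)\rho(\Omega)^{2/(1-\tau)}$.
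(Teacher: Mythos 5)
Your argument matches the paper's: both invoke Proposition \ref{Prop11} on a ball contained in $\Omega$ and combine it with the maximum principle for the subharmonic function $u$ to contradict the boundary bound $\phi\le C$. The paper's version is a one-liner (take one ball $B_R(x_0)\subset\Omega$, set $C=C(n,\tau)R^{2/(1-\tau)}$, observe $\sup_\Omega u=\sup_{\partial\Omega}\phi$); your version merely refines it by passing to the inradius and halving the constant to sidestep strict-inequality issues.
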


In fact, we take a ball $B_R(x_0)$ in the domain $\Omega$ and let
$C=C(n,\alpha)R^{\frac{2}{1-\tau}}$. Then we have $sup_{\Omega}
u=sup_{\partial\Omega} \phi>C$.

Using this result, we can find a sequence $(u_j)$ of positive
solutions to (\ref{eqn1}) in the ball $B=B_1(0)$ such that $min
u_j\to 0+$ as $j\to +\infty$. Choose $C$ large enough, we can
solve (\ref{eqn1}) to get a unique positive radial solution $u$
with $u=C$ on the boundary $\partial B$. Then there is a constant
$M>0$ such that $|u|_{C^3}\leq M$. Let $\phi$ be as in the
Corollary above and let $\phi_t=tC+(1-t)\phi$, where $t\in [0,1]$.
Let $p>1$ and let $\delta>0$. Given a smooth function $u>0$ in
$W^{2,p}(B)$ with boundary data $\phi_t$. Consider the problem
$$
\Delta v=\frac{v}{u^{1-\tau}}, \; in \; B,
$$
with the boundary data $v=\phi_t$ on the boundary $\partial B$.
Set $T_t(u)=v$ and
$$\mathbf{U}(\delta)=\{v\in
W^{2,p}(B); v>\delta, |v|_{W^{2,p}(B)}\leq M(\delta)\}.$$ Here
$M(\delta)$ is a constant coming from the Lp estimate depending on
$\delta$. Then the fixed point of $T_t$ in $\mathbf{U}(\delta)$ is
a positive solution to (\ref{eqn1}) with boundary data $u=\phi_t$.
Note that $T_t$ is a compact operator from $\mathbf{U}(\delta)\to
W^{2,p}(B)$,
$$
deg(1-T_1,\mathbf{U}(\delta),0)=1
$$
and
$$
deg(1-T_0,\mathbf{U}(\delta),0)=0.
$$
Hence, we have $t_0\in (0,1)$ and $u\in\partial\mathbf{U}(\delta)$
such that $T_{t_0}(u)=u$.
 Choose $\delta=\delta_j\to
0$. Hence, we have a sequence $t_j\in(0,1)$ such that
$$
T_{t_j}(u_j)=u_j
$$
and $u_j\in \partial\mathbf{U}(\delta_j)$ (which implies that
$min_B u_j=\delta_j\to 0$).

We can do the same thing to (\ref{eqn1}) in the domain $\Omega$.
Hence, we have

\begin{Coro} \label{co12}
Given a bounded regular domain $\Omega$ in $\mathbf{R}^n$ such
that for sufficiently large constant $C>0$ as the Dirichlet
boundary data, the problem (\ref{eqn1}) has unique positive
solution. There exists a sequence of positive solutions $(u_j)$ to
(\ref{eqn1}) in the domain $\Omega$ with
$$
min_{\Omega} u_j\to 0.
$$
\end{Coro}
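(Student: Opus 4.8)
The plan is a Leray--Schauder continuation argument along the family of boundary data joining a large constant, for which (\ref{eqn1}) is solvable by hypothesis, to a small one, for which the nonexistence Corollary above rules out positive solutions; the continuation parameter will be arranged so that the solution which must appear in between has minimum exactly equal to a prescribed small number $\delta$, and then one lets $\delta\to0$.

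First I fix the data. Pick a ball $B_R(x_0)\subset\Omega$, let $C_0=C(\Omega)$ be the threshold constant furnished by the nonexistence Corollary above, and take $\phi$ to be a positive constant with $\phi<C_0$. Let $C>0$ be a constant, large enough that the hypothesis applies, so that (\ref{eqn1}) with boundary datum $C$ has a unique positive solution $u_*$, with $|u_*|_{C^3}\le M$ and $\min_{\Omega}u_*>0$. For $t\in[0,1]$ set $\phi_t=tC+(1-t)\phi$, fix $p>n$, and define $T_t(u)=v$ on $\{u\in W^{2,p}(\Omega):u\ge\delta\}$, where $v$ is the unique solution of the linear problem $\Delta v-u^{\tau-1}v=0$ in $\Omega$, $v=\phi_t$ on $\partial\Omega$. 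Since the zeroth-order coefficient $-u^{\tau-1}$ is nonpositive and bounded, this problem is uniquely solvable with $v>0$ by the maximum principle, and standard elliptic theory (together with the compactness of the embedding $C^{2,\gamma}(\overline{\Omega})\hookrightarrow W^{2,p}(\Omega)$ on the regular domain $\Omega$) shows $T_t$ is compact and jointly continuous in $(t,u)$. By construction, $u$ is a fixed point of $T_t$ if and only if $u$ is a positive solution of (\ref{eqn1}) with boundary datum $\phi_t$.

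Next I introduce the open set $\mathbf{U}(\delta)=\{v\in W^{2,p}(\Omega):v>\delta,\ |v|_{W^{2,p}}<M(\delta)\}$. The a priori bound that makes everything work is: if $u$ is a fixed point of any $T_t$ with $\min_{\Omega}u\ge\delta$, then $u$ is subharmonic with boundary values $\le C$, so $0<u\le C$, and $0<\Delta u=u^{\tau}\le\delta^{\tau}$, whence Calderon--Zygmund estimates give $|u|_{W^{2,p}}\le K(\delta,C,\Omega)$. Choosing $M(\delta)>K(\delta,C,\Omega)+|u_*|_{W^{2,p}}$, I ensure that (i) $u_*\in\mathbf{U}(\delta)$ whenever $\delta<\min_{\Omega}u_*$, and (ii) any fixed point of $T_t$ lying on $\partial\mathbf{U}(\delta)$ must satisfy $\min_{\Omega}u=\delta$.

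Now for the degree. At $t=0$ the boundary datum $\phi<C_0$ forces, by the nonexistence Corollary above, that $T_0$ has no fixed point at all, so $I-T_0$ is nonzero on $\overline{\mathbf{U}(\delta)}$ and $\deg(I-T_0,\mathbf{U}(\delta),0)=0$. At $t=1$ the hypothesis gives that $u_*$ is the only fixed point of $T_1$ in $\mathbf{U}(\delta)$, it is interior, and $\deg(I-T_1,\mathbf{U}(\delta),0)$ equals the Leray--Schauder index of $u_*$; since $T_1'(u_*)h=(\Delta-u_*^{\tau-1})^{-1}[(\tau-1)u_*^{\tau-1}h]$ and, for $C$ large, the coefficient $u_*^{\tau-1}\le(\min_{\Omega}u_*)^{\tau-1}$ is small enough that $\Delta-\tau u_*^{\tau-1}$ is coercive (i.e.\ $|\tau|u_*^{\tau-1}$ lies below the first Dirichlet eigenvalue of $\Omega$) and $T_1'(u_*)$ has norm $<1$, the index is $+1$ and $\deg(I-T_1,\mathbf{U}(\delta),0)=1$. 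If $I-T_t$ never vanished on $\partial\mathbf{U}(\delta)$, homotopy invariance of the degree would give $0=1$, which is absurd; hence for every sufficiently small $\delta>0$ there are $t_\delta\in(0,1)$ and $u_\delta\in\partial\mathbf{U}(\delta)$, i.e.\ a positive solution of (\ref{eqn1}) on $\Omega$ with boundary datum $\phi_{t_\delta}$ and $\min_{\Omega}u_\delta=\delta$. Taking $\delta=\delta_j\downarrow0$ and $u_j=u_{\delta_j}$ produces the asserted sequence with $\min_{\Omega}u_j\to0$. The main obstacle is the identification $\deg(I-T_1,\mathbf{U}(\delta),0)=1$: one genuinely needs the large-constant solution to be nondegenerate with index one, which is where ``$C$ sufficiently large'' is used — it must force $\min_{\Omega}u_*$ to be large, hence the zeroth-order term of the linearized operator to be subcritical. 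The remaining ingredients — compactness of $T_t$, the $\delta$-dependent $W^{2,p}$ bound that both confines the homotopy and pins an escaping solution to the level set $\{\min u=\delta\}$, and the vanishing of the degree at $t=0$ from the nonexistence Corollary — are routine.
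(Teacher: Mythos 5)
Your proposal follows essentially the same Leray--Schauder continuation scheme as the paper: the same homotopy of boundary data $\phi_t=tC+(1-t)\phi$, the same compact map $T_t$ obtained by solving the linear problem $\Delta v=u^{\tau-1}v$ with datum $\phi_t$, the same $\delta$-truncated set $\mathbf{U}(\delta)$ with the $W^{2,p}$ a priori bound pinning a boundary fixed point to the level $\min u=\delta$, and the same degree jump $0\ne1$ forcing a solution on $\partial\mathbf{U}(\delta)$ for each small $\delta$. The only difference is that you spell out the justification of $\deg(I-T_1,\mathbf{U}(\delta),0)=1$ via the linearization and the smallness of $u_*^{\tau-1}$ for large $C$, a step the paper simply asserts.
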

An open question is, which domain has the uniqueness property in
Corollary (\ref{co12}).

 Using Theorem \ref{thm4}, we can easily derive
the following:
\begin{Prop}\label{thm5} There is no positive solution to (\ref{eqn1})
in a cone-like unbounded domain $\Omega$ with the bound
$$
R^{-n-\frac{2}{1-\tau}+\sigma}\int_{B_R}udx\leq K
$$
for every ball $B_R\subset\Omega$ with $R\geq 1$, for some
constant $K>0$ and $\sigma>0$.
\end{Prop}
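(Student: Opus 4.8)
The plan is to derive a contradiction by playing the hypothesized upper bound against the $L^1$ lower bound furnished by Theorem \ref{thm4}. First I would observe that, since $\Omega$ is cone-like and unbounded, it contains interior balls $B_R(x_R)\subset\Omega$ of arbitrarily large radius $R$ (for a genuine cone $\{x\cdot e>|x|\cos\alpha\}$ one takes $x_R$ on the axis at distance of order $R/\sin\alpha$ from the vertex, so that $B_R(x_R)\subset\Omega$ for all large $R$). Our equation $\Delta u=u^{\tau}$ is exactly (\ref{eqn10}) with $f(s)=s^{\tau}$, and for this $f$ we have
$$
s^{\frac{\tau}{\tau-1}}f(s)^{\frac{1}{1-\tau}}=s^{\frac{\tau}{\tau-1}}\,s^{\frac{\tau}{1-\tau}}=1,
$$
so the structural hypothesis of Theorem \ref{thm4} holds with $C_0=1$; moreover a positive solution is in particular a positive continuous weak solution, so Theorem \ref{thm4} applies verbatim.

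Next I would apply Theorem \ref{thm4} on each admissible ball $B_R(x_R)\subset\Omega$, which gives
$$
\int_{B_R(x_R)}u\ \geq\ C(n,\tau)\,R^{\,n+\frac{2}{1-\tau}},
$$
with $C(n,\tau)>0$. On the other hand, the standing assumption of the Proposition, applied to the very same ball (legitimate since $R\geq 1$), gives
$$
\int_{B_R(x_R)}u\ \leq\ K\,R^{\,n+\frac{2}{1-\tau}-\sigma}.
$$
Combining the two and dividing by $R^{\,n+\frac{2}{1-\tau}}>0$ yields $C(n,\tau)\leq K R^{-\sigma}$ for every sufficiently large $R$; letting $R\to\infty$ and using $\sigma>0$ forces $C(n,\tau)\leq 0$, which contradicts the positivity of the constant in Theorem \ref{thm4}. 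Hence no such positive solution can exist.

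The only step that is not purely automatic is the geometric one: making sure that ``cone-like unbounded domain'' is taken to mean a domain that genuinely contains a family of interior balls of unbounded radius, since this is exactly what lets us invoke Theorem \ref{thm4} on $B_R\subset\Omega$ with $R\to\infty$. (If $\Omega$ only admitted interior balls up to some fixed radius the statement would fail, so this hypothesis is essential and I would state it precisely at the outset.) Everything after that is a one-line scaling comparison; indeed the same comparison shows a bit more, namely that any positive solution on such a domain satisfies $\limsup_{R\to\infty}R^{-n-\frac{2}{1-\tau}}\int_{B_R}u>0$, so the exponent $n+\frac{2}{1-\tau}$ appearing in Theorem \ref{thm4} is sharp.
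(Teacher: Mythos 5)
Your proof is correct and follows the same route as the paper: combine the $L^1$ lower bound of Theorem \ref{thm4} on arbitrarily large balls $B_R\subset\Omega$ with the hypothesized upper bound to get a contradiction as $R\to\infty$. You simply spell out the details (verifying the structural hypothesis on $f(s)=s^{\tau}$ and the existence of large interior balls) that the paper leaves implicit.
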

 \begin{proof} Assume we have a positive solution $u$. Note that we can choose an arbitrary large
 ball $B_R$ in the domain $\Omega$.
Then, using our Theorem \ref{thm4}, we get $$ R^{\sigma}\leq K,
$$
which is not true by sending $R\to+\infty$.
\end{proof}

We point out that for solution $u$ to (\ref{eqn1}), the quantity
$$ R^{-n-\frac{2p}{1-\tau}}\int_{B_R(0)}u^p
$$
is dimensionless. By this, we mean that if $u$ is a solution to
(\ref{eqn1}) in the ball $B_R(0)$, then the function $$
v(x)=R^{-\frac{2}{1-\tau}}u(Rx)
$$ is a solution to (\ref{eqn1}) in $B_1(0)$ with
$$R^{-n-\frac{2}{1-\tau}p}\int_{B_R}u^{p}dx=\int_{B_1(0)} v^p.
$$
Hence, in this sense, the $L^1$ lower bound estimate in Theorem
\ref{thm4} is the best one.
\end{section}

\begin{section}{Symmetry about Related equations}\label{mov1}

We also consider the integral equation
 \begin{equation}\label{eqn2}
  u(x)=h(x)-\int_{\mathbf{R}^{n}}|x-y|^{\mu-n}u(y)^{\tau}dy,
  \end{equation}
  with $n\geq2$, $0<\mu<n$, $h(x)$ is a positive smooth function, and $\tau<0$.
  This integral equation is closely
  related to the elliptic differential equation
the elliptic equation:
$$
(-\Delta)^{\mu/2}(u-h)=-u^{\tau}, \; \;in\; \; \mathbf{R}^n.
$$
It is clearly that the
   positive solution to (\ref{eqn2}) is bounded from above by $h$.

We shall use the following notation. Given any hyperplane $\pi$ in
$\mathbf{R}^n$. For any point $x\in \mathbf{R}^n$, let $x^{\pi}$
be the reflection of $x$ about the plane $\pi$ and let
$\pi(x)\in\pi$ be projection of $x$ into $\pi$. Define
$$
u^{\pi}(x)=u(x^{\pi})
$$
for any function $u:\mathbf{R}^n\to \mathbf{R}$.

In the famous paper of Gidas and Spruck \cite{GS81}, they proved
that for $n>2$ and $\mu=2$, and $1\leq \tau <\frac{n+2}{n-2}$, the
only non-negative solution to the equation
$$
  -\Delta u=u^{\tau},\ \ in \ \ {\mathbf{R}^n},
$$
 is zero. However, the negative index
$\tau$ case has not been treated before X.W.Xu's recent work
\cite{Xu07} . So, the following result can be considered as a
generalization of their result to the equation (\ref{eqn2}).

  \begin{Thm}\label{thm1} Given some $\beta>1$ and $q>1$.
  Let
  \begin{equation}
  u^{\tau-1}\in L^{\beta}(\mathbf{R}^{n}),  \label{cond1}
  \end{equation} be a positive
  solution of equation (\ref{eqn2}) with
  \begin{center}
  $
  \begin{array}{lll}
  \tau<0  & and &
  \beta=\frac{\tau-1}{\frac{n-\mu}{n}\tau-1}>\frac{2n}{n-\mu}.
  \end{array}
  $
  \end{center}
Assume that for some plane $\pi$, we have $h(x)=h({\pi}(x))$.
  Then $u(x)$ is symmetric to the plane $\pi$.
  \end{Thm}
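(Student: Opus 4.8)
The plan is to run the method of moving planes on the integral equation (\ref{eqn2}), exactly as in the Gidas--Spruck / Chen--Li--Ou circle of ideas, but adapted to the negative exponent $\tau<0$. Since $h$ is symmetric about $\pi$, after an affine change of coordinates I may assume $\pi=\{x_1=0\}$ and write, for $\lambda\in\mathbf{R}$, the reflected half-space $\Sigma_\lambda=\{x: x_1<\lambda\}$, the reflected point $x^\lambda=(2\lambda-x_1,x_2,\dots,x_n)$, and the comparison function $u_\lambda(x)=u(x^\lambda)$. The goal is to show that the set of $\lambda$ for which $u_\lambda\ge u$ on $\Sigma_\lambda$ is all of $(-\infty,0]$, and symmetrically from the other side, forcing $u\equiv u_0$.

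The first key step is to derive the integral inequality that drives everything. Using $|x-y|^{\mu-n}$ and the reflection identity $|x-y^\lambda|=|x^\lambda-y|$, for $x\in\Sigma_\lambda$ one gets
\begin{equation}
u(x)-u_\lambda(x)=\int_{\Sigma_\lambda}\bigl(|x-y|^{\mu-n}-|x-y^\lambda|^{\mu-n}\bigr)\bigl(u_\lambda(y)^\tau-u(y)^\tau\bigr)\,dy,
\end{equation}
where I have used $h(x)=h(x^\lambda)$ to cancel the $h$-terms, and the kernel difference $|x-y|^{\mu-n}-|x-y^\lambda|^{\mu-n}$ is positive for $x,y\in\Sigma_\lambda$. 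Here the negative exponent actually helps with monotonicity: on $\Sigma_\lambda^-:=\{x\in\Sigma_\lambda: u(x)>u_\lambda(x)\}$, since $\tau<0$ the map $t\mapsto t^\tau$ is decreasing, so $u_\lambda(y)^\tau-u(y)^\tau>0$ precisely where $u(y)>u_\lambda(y)$, i.e. on $\Sigma_\lambda^-$ itself; moreover $|u_\lambda^\tau-u^\tau|\le |\tau|\,\min(u,u_\lambda)^{\tau-1}|u-u_\lambda|\le|\tau|\,u^{\tau-1}|u-u_\lambda|$ (recall $0<u\le h$ bounds and $\tau-1<0$). Combining these, on $\Sigma_\lambda^-$,
\begin{equation}
0<u(x)-u_\lambda(x)\le |\tau|\int_{\Sigma_\lambda^-}|x-y|^{\mu-n}u(y)^{\tau-1}\bigl(u(y)-u_\lambda(y)\bigr)\,dy.
\end{equation}
Applying the Hardy--Littlewood--Sobolev inequality in $L^{q}(\Sigma_\lambda^-)$ followed by H\"older with the weight $u^{\tau-1}\in L^\beta$, I obtain $\|u-u_\lambda\|_{L^q(\Sigma_\lambda^-)}\le C\,\|u^{\tau-1}\|_{L^\beta(\Sigma_\lambda^-)}\,\|u-u_\lambda\|_{L^q(\Sigma_\lambda^-)}$, where the exponent bookkeeping is forced by the relation $\beta=\frac{\tau-1}{\frac{n-\mu}{n}\tau-1}$ in the hypothesis and the constraint $\beta>\frac{2n}{n-\mu}$ guarantees the dual HLS exponents are admissible with some $q>1$.

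The second step is the start of the moving planes. For $\lambda$ sufficiently negative, $\|u^{\tau-1}\|_{L^\beta(\Sigma_\lambda^-)}$ is as small as we like (since $u^{\tau-1}\in L^\beta(\mathbf{R}^n)$ and $\Sigma_\lambda\downarrow\emptyset$), so the inequality above forces $\|u-u_\lambda\|_{L^q(\Sigma_\lambda^-)}=0$, hence $|\Sigma_\lambda^-|=0$ and $u\le u_\lambda$ on $\Sigma_\lambda$. The third step is to push $\lambda$ up: let $\lambda_0=\sup\{\lambda\le 0: u\le u_\mu\text{ on }\Sigma_\mu\ \forall\mu\le\lambda\}$ and argue $\lambda_0=0$. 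If $\lambda_0<0$, then $u\le u_{\lambda_0}$ on $\Sigma_{\lambda_0}$ and $u\not\equiv u_{\lambda_0}$ (otherwise by the integral identity one could shift further, or $u$ would already be symmetric about a plane strictly inside and one re-runs from the other side); then $u<u_{\lambda_0}$ in the interior, and by an absolute-continuity/tightness argument one can choose $\epsilon>0$ so that for $\lambda\in(\lambda_0,\lambda_0+\epsilon)$ the measure of $\Sigma_\lambda^-$ is small enough to make $C\|u^{\tau-1}\|_{L^\beta(\Sigma_\lambda^-)}<1$, contradicting maximality. Hence $u\le u_0$ on $\Sigma_0$; running the same argument from $+\infty$ gives $u\le u_0$ on the complementary half-space too, i.e. $u(x)\le u(x^0)$ and $u(x^0)\le u(x)$, so $u$ is symmetric about $\pi$.

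The main obstacle is the step establishing that $\lambda_0$ cannot be strictly negative: one must either rule out a ``stuck'' plane using the strict positivity of the kernel difference together with the fact that $u\not\equiv u_{\lambda_0}$ (a strong-maximum-principle-type statement for the integral operator), and then upgrade the strict inequality $u<u_{\lambda_0}$ in $\Sigma_{\lambda_0}$ to a uniform smallness of $|\Sigma_\lambda^-|$ for $\lambda$ slightly larger — this is the usual delicate point in integral moving planes, requiring a decomposition of $\Sigma_\lambda^-$ into a piece where $u$ is bounded away from $u_{\lambda_0}$ (small measure by continuity near $\lambda_0$) and a far-away piece controlled by the integrability tail of $u^{\tau-1}$. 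The verification that the exponents $\beta,q,\mu,n,\tau$ line up in the HLS+H\"older chain is routine given the hypothesis, and the boundedness $0<u\le h$ supplies the local integrability of $u^{\tau-1}$ needed so that $u^{\tau-1}\in L^\beta_{loc}$ is not an extra assumption beyond (\ref{cond1}).
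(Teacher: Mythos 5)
Your proposal follows the same route as the paper's: derive the reflection identity for the integral equation, bound $|u_\lambda^\tau-u^\tau|$ by the mean value theorem, apply Hardy--Littlewood--Sobolev plus H\"older to get a self-improving $L^q$ estimate on the bad set, use the $L^\beta$ tail of $u^{\tau-1}$ to start the plane, and close with the standard continuation. That is exactly the skeleton of Section~\ref{mov1} in the paper, so at the level of strategy you have reproduced the argument.

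However, there is a concrete sign error in your mean-value step that, as written, breaks the tail estimate. You claim $\min(u,u_\lambda)^{\tau-1}\leq u^{\tau-1}$, but since $\tau-1<0$ the map $t\mapsto t^{\tau-1}$ is \emph{decreasing}, so $\min(u,u_\lambda)^{\tau-1}=\max\bigl(u^{\tau-1},u_\lambda^{\tau-1}\bigr)\geq u^{\tau-1}$; the inequality goes the wrong way. On your bad set $\Sigma_\lambda^-=\{u>u_\lambda\}$ the correct bound is $|u_\lambda^\tau-u^\tau|\leq |\tau|\,u_\lambda^{\tau-1}\,|u-u_\lambda|$, and then the quantity you must make small is $\|u_\lambda^{\tau-1}\|_{L^\beta(\Sigma_\lambda)}$, which by the change of variables $y\mapsto y^\lambda$ equals $\|u^{\tau-1}\|_{L^\beta(\{x_1>\lambda\})}$; as $\lambda\to-\infty$ this tends to the \emph{full} norm $\|u^{\tau-1}\|_{L^\beta(\mathbf{R}^n)}$, not to zero, so the smallness needed to start the plane is not available. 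The repair is to take the bad set to be $\{x\in\Sigma_\lambda: u_\lambda(x)>u(x)\}$: there $\min(u,u_\lambda)=u$, the mean-value factor is genuinely $u^{\tau-1}$, and $\|u^{\tau-1}\|_{L^\beta(\Sigma_\lambda)}\to 0$ as $\Sigma_\lambda$ shrinks because of hypothesis~(\ref{cond1}). The moving plane then yields $u_\lambda\leq u$ on $\Sigma_\lambda$ (rather than $u\leq u_\lambda$), which after running the argument from both sides still gives symmetry about $\pi$. It is worth pointing out that the paper's own writeup contains the mirror image of the same slip (it keeps $u_\lambda^{\tau-1}$ in the chain but then claims $\int_{\Sigma_\lambda}u_\lambda^{(\tau-1)\beta}\to 0$), so you have in fact faithfully reproduced both the method and the oversight; the fix indicated above is what makes the argument close.
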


Using the Kelvin transformation, we have another kind of result.

\begin{Thm}\label{thm2} Assume that $h(x)=h$ is a constant function.
Given some $\beta>1$ and $q>1$. Assume that
\begin{center}
  $
  \begin{array}{lll}
  \tau<0  & and &
  \beta=\frac{\tau-1}{\frac{n-\mu}{n}\tau-1}>\frac{2n}{n-\mu}.
  \end{array}
  $
  \end{center}

Then, for any positive solution to (\ref{eqn2}) with
  \begin{equation}
  u^{\tau-1}\in L^{\beta}(\mathbf{R}^{n}),  \label{cond1}
  \end{equation}  and
\begin{equation}
 |x|^{\mu-n}u(\frac{x}{|x|^{2}})\in
L^q(\mathbf{R}^n), \label{cond3}
\end{equation}
$u$ is radial symmetric at zero.
  \end{Thm}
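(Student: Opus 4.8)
The plan is to apply the method of moving planes together with a Kelvin transform, reducing the radial symmetry statement about the origin to a family of reflection symmetries in all directions. First I would fix a direction, say $e_1$, and perform the Kelvin transform $\bar u(x)=|x|^{\mu-n}u(x/|x|^2)$, which by the standard computation turns the integral equation (\ref{eqn2}) with constant $h$ into an equation of the same type for $\bar u$, with a kernel of the form $|x-y|^{\mu-n}$ times a weight $|y|^{-\gamma}$ for an explicit $\gamma>0$, and with the constant $h$ transformed into the singular-at-zero function $h|x|^{\mu-n}$. The integrability hypotheses (\ref{cond1}) and (\ref{cond3}) are precisely what is needed to make $\bar u$ and the relevant tail integrals finite, so that the moving-plane argument can get started from infinity (equivalently, from the origin in the transformed picture).

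Next I would set up the moving planes $\pi_\lambda=\{x_1=\lambda\}$ for $\lambda<0$ and the comparison function $w_\lambda(x)=\bar u^{\pi_\lambda}(x)-\bar u(x)$ on the half-space $\Sigma_\lambda=\{x_1<\lambda\}$. Using the integral representation one writes $w_\lambda$ as an integral operator applied to the difference of the nonlinear terms; since $\tau<0$ the function $s\mapsto s^\tau$ is decreasing, which gives the sign structure needed so that on the set where $w_\lambda<0$ one obtains an inequality of the form
\[
\|w_\lambda\|_{L^q(\Sigma_\lambda^-)}\le C\,\|u^{\tau-1}\|_{L^\beta(\Sigma_\lambda)}\,\|w_\lambda\|_{L^q(\Sigma_\lambda^-)},
\]
where $\Sigma_\lambda^-$ is the negativity set. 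The exponent bookkeeping here is exactly the role of the condition $\beta=\frac{\tau-1}{\frac{n-\mu}{n}\tau-1}>\frac{2n}{n-\mu}$: it makes the Hardy--Littlewood--Sobolev inequality applicable with matching exponents $\beta$ and $q$. Because $\|u^{\tau-1}\|_{L^\beta(\Sigma_\lambda)}\to 0$ as $\lambda\to-\infty$ (by absolute continuity of the integral, using (\ref{cond1})), the constant $C\|u^{\tau-1}\|_{L^\beta(\Sigma_\lambda)}$ is $<1$ for $\lambda$ very negative, forcing $\|w_\lambda\|_{L^q(\Sigma_\lambda^-)}=0$, i.e. $w_\lambda\ge 0$ in $\Sigma_\lambda$. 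This starts the process.

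Then I would run the standard continuation: let $\lambda_0=\sup\{\lambda<0:\ w_\mu\ge 0\ \text{in}\ \Sigma_\mu\ \text{for all}\ \mu\le\lambda\}$ and argue by contradiction that $\lambda_0$ cannot be a genuine critical value unless $\bar u$ is symmetric about $\pi_{\lambda_0}$; the narrow-region / small-measure estimate above, applied near $\lambda_0$, again gives a contractive constant on the (small) negativity set and rules out a strict critical plane. One concludes either that $\bar u$ is symmetric about some $\pi_{\lambda_0}$ with $\lambda_0\le 0$, or that the planes move all the way to $\lambda_0=0$. In the former case one then moves planes from $+\infty$ as well and a matching argument pins the symmetry plane; combined with the fact that the direction $e_1$ was arbitrary and with the transformation rule of the Kelvin transform under reflections, this forces $u$ itself to be radially symmetric about the origin. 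I expect the main obstacle to be the careful handling of the singular weight $|y|^{-\gamma}$ and the boundary behavior of $\bar u$ near the origin introduced by the Kelvin transform: one must check that no mass is lost there and that the HLS estimate still closes with the given exponents, and that the transformed ``constant'' term $h|x|^{\mu-n}$, though singular, is itself invariant under reflections in planes through the origin so that it does not obstruct the moving-plane inequality. Once these integrability and invariance points are secured, the rest is the by-now routine integral-form moving-plane machinery.
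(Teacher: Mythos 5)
Your proposal follows essentially the same route as the paper: Kelvin transform centered at the origin, the moving-plane method started from infinity, a doubly-weighted Hardy--Littlewood--Sobolev estimate plus H\"older to produce a contractive constant on the negativity set, and absolute continuity of the tail integral (via the hypotheses (\ref{cond1}) and (\ref{cond3})) to make that constant less than $1/2$ for extreme $\lambda$. The only presentational difference is that you spell out explicitly the continuation step and the passage from arbitrariness of the direction $e_1$ to radial symmetry of $u$, which the paper leaves implicit by citing the ``standard moving plane trick,'' and you flag (correctly) that the kernel after the Kelvin transform carries a singular weight $|y|^{-\alpha}$ so that the Stein--Weiss form of HLS is what is actually needed.
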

  The proof of Theorem \ref{thm2} will be given in the last section.

In recent years, there are important progress in the study of
symmetry properties of non-negative solutions to Yamabe type
equations. In particular, X.Xu \cite{Xu07} has obtained some
related results to ours. His equation is the following
\begin{equation*}
  u(x)=\int_{\mathbf{R}^{n}}|x-y|^{\mu-n}u(y)^{\tau}dy,
  \end{equation*}
which corresponds to the elliptic equation:
$$
(-\Delta)^{\mu/2}u=u^{\tau}, \; \;in\; \; \mathbf{R}^n.
$$
 One should be caution about the negative sign before the Laplacian operator,
 which makes the equation can not have any positive solution on the whole space.
 We will use a symmetry method (see also \cite{CLO03} and
\cite{CM049}) to prove our results. This symmetry method is
powerful in our case since we can use the behavior at infinity of
the solution.

\emph{We now give a proof of Theorem \ref{thm1}}: After using a
rotation, we may assume that the hyperplane $\pi$ is orthonormal
to $x_1$ axis at the origin. So we may assume that $h(x)=h$ is a
constant in the following argument.

  For a given real number $\lambda$, we define
  $$
  \Sigma_{\lambda}=\{x=(x_{1},\cdots,x_{n})|x_{1}\geq\lambda\},
  $$
  and let $x^{\lambda}=(2\lambda-x_{1},x_{2},\cdots,x_{n})$ and
  $u_{\lambda}(x)=u(x^{\lambda})$.

    We can easily get the following
  \begin{Lem}\label{lem1}
  For any positive solution $u(x)$ of (1), we have
  \begin{equation}
  u_{\lambda}(x)-u(x)=-
  \int_{\Sigma_{\lambda}}(\frac{1}{|x-y|^{n-\mu}}-\frac{1}{|x^{\lambda}-y|^{n-\mu}})
  (u_{\lambda}(y)^{\tau}-u(y)^{\tau})dy. \label{int07}
  \end{equation}
  \end{Lem}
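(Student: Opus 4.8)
The plan is to write down the integral representation of $u$ and of $u_\lambda$ and subtract, then use the symmetry of the kernel $|x-y|^{\mu-n}$ together with the change of variables $y\mapsto y^\lambda$ on the half-space $\mathbf{R}^n\setminus\Sigma_\lambda$. Since we have reduced to the case $h(x)\equiv h$ constant (after the rotation), the term $h$ cancels immediately upon subtraction, so only the convolution integral matters. Concretely, starting from
$$
u(x)=h-\int_{\mathbf{R}^n}|x-y|^{\mu-n}u(y)^\tau\,dy,
$$
replacing $x$ by $x^\lambda$ and using $|x^\lambda-y|=|x-y^\lambda|$ (reflection is an isometry preserving $\Sigma_\lambda$ setwise and swapping it with its complement) gives
$$
u_\lambda(x)=h-\int_{\mathbf{R}^n}|x-y^\lambda|^{\mu-n}u(y)^\tau\,dy
=h-\int_{\mathbf{R}^n}|x-z|^{\mu-n}u_\lambda(z)^\tau\,dz,
$$
after the substitution $z=y^\lambda$ (which has unit Jacobian).

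Next I would split both integrals over $\mathbf{R}^n$ into the piece over $\Sigma_\lambda$ and the piece over its complement $\Sigma_\lambda^c=\{x_1<\lambda\}$. For the complement piece, I again substitute $y\mapsto y^\lambda$ to turn an integral over $\Sigma_\lambda^c$ into an integral over $\Sigma_\lambda$: in the representation of $u(x)$ the complement piece becomes $\int_{\Sigma_\lambda}|x-y^\lambda|^{\mu-n}u_\lambda(y)^\tau\,dy=\int_{\Sigma_\lambda}|x^\lambda-y|^{\mu-n}u_\lambda(y)^\tau\,dy$, and similarly for $u_\lambda(x)$ the complement piece becomes $\int_{\Sigma_\lambda}|x^\lambda-y|^{\mu-n}u(y)^\tau\,dy$. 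Collecting the four resulting integrals over $\Sigma_\lambda$ and subtracting $u_\lambda(x)-u(x)$, the constant $h$ drops out and the terms regroup as
$$
u_\lambda(x)-u(x)=-\int_{\Sigma_\lambda}\Big(|x-y|^{\mu-n}-|x^\lambda-y|^{\mu-n}\Big)\big(u_\lambda(y)^\tau-u(y)^\tau\big)\,dy,
$$
which is exactly \eqref{int07} once one writes $|x-y|^{\mu-n}=\tfrac{1}{|x-y|^{n-\mu}}$ and uses $|x^\lambda - y| = |x - y^\lambda|$ for $x,y\in\Sigma_\lambda$.

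The only genuine point requiring care — the main (mild) obstacle — is justifying that all these integrals converge absolutely so that the splitting and the substitutions are legitimate; this is where the hypothesis $u^{\tau-1}\in L^\beta(\mathbf{R}^n)$ with $\beta=\frac{\tau-1}{\frac{n-\mu}{n}\tau-1}>\frac{2n}{n-\mu}$ enters, via a Hardy–Littlewood–Sobolev estimate on the Riesz potential applied to $u^\tau = u\cdot u^{\tau-1}$, together with the a priori bound $0<u\le h$ which controls the factor $u$. Granting finiteness, the identity is a purely formal manipulation of the reflection, so once the integrability bookkeeping is in place the lemma follows.
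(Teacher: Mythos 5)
Your proof is correct and follows essentially the same route as the paper: split $\mathbf{R}^n$ into $\Sigma_\lambda$ and its complement, reflect the complement piece back onto $\Sigma_\lambda$ via $y\mapsto y^\lambda$ using $|x-y^\lambda|=|x^\lambda-y|$, substitute $x\mapsto x^\lambda$, and subtract. The only cosmetic difference is the order of operations---you derive the $\mathbf{R}^n$ representation for $u_\lambda$ first by a global change of variables and then split, while the paper splits $h-u(x)$ first and then replaces $x$ by $x^\lambda$---and your remark about justifying absolute convergence is a sensible precaution that the paper leaves implicit.
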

  \begin{proof}
  Let
  $$
  \Sigma_{\lambda}^{c}=\{x=(x_{1},\cdots,x_{n})|x_{1}<\lambda\}.
  $$
  Then it is easy to see that
  \begin{center}
  $
  \begin{array}{lll}
  h-u(x) & = & \int_{\Sigma_{\lambda}}\frac{1}{|x-y|^{n-\mu}}u(y)^{\tau}dy
  +\int_{\Sigma_{\lambda}^{c}}\frac{1}{|x-y|^{n-\mu}}u(y)^{\tau}dy\\
  & = & \int_{\Sigma_{\lambda}}\frac{1}{|x-y|^{n-\mu}}u(y)^{\tau}dy
  +\int_{\Sigma_{\lambda}}\frac{1}{|x-y^{\lambda}|^{n-\mu}}u(y^{\lambda})^{\tau}dy\\
  & = & \int_{\Sigma_{\lambda}}\frac{1}{|x-y|^{n-\mu}}u(y)^{\tau}dy
  +\int_{\Sigma_{\lambda}}\frac{1}{|x^{\lambda}-y|^{n-\mu}}u_{\lambda}(y)^{\tau}dy.
  \end{array}
  $
  \end{center}
  Here we have used the fact that
  $|x-y^{\lambda}|=|x^{\lambda}-y|$. Substituting $x$ by $x^{\lambda}$, we get
  $$
  h-u(x^{\lambda})=\int_{\Sigma_{\lambda}}\frac{1}{|x^{\lambda}-y|^{n-\mu}}u(y)^{\tau}dy
  +\int_{\Sigma_{\lambda}}\frac{1}{|x-y|^{n-\mu}}u_{\lambda}(y)^{\tau}dy.
  $$
  Thus
  $$
  u(x^{\lambda})-u(x)=-\int_{\Sigma_{\lambda}}(\frac{1}{|x-y|^{n-\mu}}-\frac{1}{|x^{\lambda}-y|^{n-\mu}})
  (u_{\lambda}(y)^{\tau}-u(y)^{\tau})dy.
  $$
  This implies (\ref{int07}).
  \end{proof}

We shall need the following
  Hardy-Littlewood-Sobolev inequality (see, for example, \cite{L83})
  \begin{equation}
  \|\int V(x,y)f(y)dy\|_{q}\leq P_{p,n}\|f\|_{p}
  \label{hls}
  \end{equation}
  with $V(x,y)=|x-y|^{-\nu}$ and
  $$1/p+\nu/n=1+1/q.$$

  {\bf Proof of Theorem \ref{thm1}.}

Define
  $$
  \Sigma_{\lambda}^{-}=\{x|x\in\Sigma_{\lambda},u(x)\geq
  u_{\lambda}(x)\},
  $$
  and
  $$
\Sigma_{\lambda}^{+}=\{x|x\in\Sigma_{\lambda},u(x)<
  u_{\lambda}(x)\}.
  $$
  Then
  $$
\Sigma_{\lambda}=\Sigma_{\lambda}^{+}\bigcup\Sigma_{\lambda}^{-}
  $$
  We want to show that for sufficiently positive values of
  $\lambda$, $\Sigma_{\lambda}^{-}$ must be empty.

  Note that for $y\in \Sigma_{\lambda}^{-}$, we have
  $u(y)^{\tau}\leq u_{\lambda}(y)^{\tau}$.
  Whenever $x,y\in\Sigma_{\lambda}$, we have that $|x-y|\leq|x^{\lambda}-y|$
  and
  $$
|x-y|^{\mu-n}\geq|x^{\lambda}-y|^{\mu-n}.
  $$
   Then by Lemma \ref{lem1}, for any $x\in\Sigma_{\lambda}^{-}$,
  \begin{equation}
  \begin{array}{lll}
  u(x)-u_{\lambda}(x) & = & -\int_{\Sigma_{\lambda}}(|x-y|^{\mu-n}-|x^{\lambda}-y|^{\mu-n})
  (u(y)^{\tau}-u_{\lambda}(y)^{\tau})dy\\
  & \leq & \int_{\Sigma_{\lambda}^{-}}|x-y|^{\mu-n}(u_{\lambda}(y)^{\tau}-u(y)^{\tau})dy\\
  & \leq & -\tau\int_{\Sigma_{\lambda}^{-}}|x-y|^{\mu-n}[u_{\lambda}^{\tau-1}(u-u_{\lambda})](y)dy.
  \end{array}
  \end{equation}

  It follows first from inequality (\ref{hls}) and the Holder inequality
  that, for any $q>n/(n-\mu)$,
\begin{equation}
\begin{array}{lll}
  \|u_{\lambda}-u\|_{L^{q}(\Sigma_{\lambda}^{-})}
  & \leq & C\|\int_{\Sigma_{\lambda}^{-}}|x-y|^{\mu-n}
  [u_{\lambda}^{\tau-1}(u_{\lambda}-u)](y)dy\|_{L^{q}(\Sigma_{\lambda}^{-})}\\
  & \leq & C(\int_{\Sigma_{\lambda}^{-}}u_{\lambda}(y)^{(\tau-1)\beta}dy)^{1/\beta}\|u_{\lambda}-u\|
  _{L^{q}(\Sigma_{\lambda}^{-})}\\
  & \leq & C(\int_{\Sigma_{\lambda}^{-}}
  u_{\lambda}(y)^{(\tau-1)\beta}dy)^{1/\beta}\|u_{\lambda}-u\|
  _{L^{q}(\Sigma_{\lambda}^{-})}.\,\,\,\,\,\,\,\\
  \end{array}
   \label{ineq}
\end{equation}

  By condition (\ref{cond1}), we can choose $N$
  sufficiently large, such that for $\lambda>N$, we have
  $$
  C(\int_{\Sigma_{\lambda}}u_{\lambda}(y)^{(\tau-1)\beta}dy)^{1/\beta}\leq\frac{1}{2}.
  $$
  Now (\ref{ineq}) implies that
  $$
  \|u_{\lambda}-u\|_{L^{q}(\Sigma_{\lambda}^{-})}=0,
  $$
  and therefore $\Sigma_{\lambda}^{-}$ must be measure zero, and
  hence empty. Then using the standard moving plane trick (see (\cite{CLO03}) and \cite{CM049}),
   we know
   that the solution $u$ is symmetric in the variable $x_1$.

So we complete the proof of Theorem \ref{thm1}.

   Finally, we remark that in some cases, using the analysis of ODE, we see that
   there is no radially symmetric
   positive solution to (\ref{eqn2}) with the condition (\ref{cond1}).
   However, we shall not do this here.
\end{section}

\begin{section}{proof of theorem \ref{thm2}}\label{mov2}

 Let us now introduce the Kelvin type
transform of $u$
  as follow
  $$
  v(x)=|x|^{\mu-n}u(\frac{x}{|x|^{2}})
  $$
  for any $x\neq0$. Then by elementary calculations, one can see
  that
  (\ref{eqn2}) is transformed into the following:
  \begin{equation}\label{eqnv1}
 h|x|^{\mu-n}-v(x)=\int_{\mathbf{R}^{n}}|x-y|^{\mu-n}|y|^{-\alpha}v(y)^{\tau}dy,
  \end{equation}
  where $\alpha=(n+\mu)-(n-\mu)\tau>0$. Obviously, $v(x)$ has a
  singularity at origin. Since $u$ is locally bounded, it is
  easy to see that $v(x)$ has no singularity at infinity, i.e.,
  for any domain $\Omega$ that is a positive distance away from
  the origin,
  \begin{equation}
  \int_{\Omega}v^{\beta}(y)dy<\infty. \label{int08}
  \end{equation}
  In fact, we have
  \begin{center}
  $
  \begin{array}{lll}
  \int_{\Omega}v^{\beta}(y)dy & = &
  \int_{\Omega}(|y|^{\mu-n}u(\frac{y}{|y|^{2}}))^{\beta}dy\\
  & = & \int_{\Omega^{\star}}(|z|^{n-\mu}u(z))^{\beta}|z|^{-2n}dz\\
  & = & \int_{\Omega^{\star}}|z|^{\beta(n-\mu)-2n}u(z)^{\beta}dz\\
  & \leq & C\int_{\Omega^{\star}}u(z)^{\beta}dz\\
  & < & \infty.
  \end{array}
  $
  \end{center}
  For the second equality, we have made the transform $y=z/|z|^{2}$.
  Since $\Omega$ is a positive distance away from
  the origin, $\Omega^{\star}$, the image of $\Omega$ under this
  transform, is bounded. Also note that $\beta(n-\mu)-2n>0$. Then we
  get the estimate (\ref{int08}).

  For a given real number $\lambda$, we define, as before,
  $$
  \Sigma_{\lambda}=\{x=(x_{1},\cdots,x_{n})|x_{1}\geq\lambda\},
  $$
  and let $x^{\lambda}=(2\lambda-x_{1},x_{2},\cdots,x_{n})$ and
  $v_{\lambda}(x)=v(x^{\lambda})$.

    We can easily get the following
  \begin{Lem}\label{lem2}
  For any solution $v(x)$ of (\ref{eqnv1}), we have
 \begin{eqnarray}\label{eqnv2}
  &&v_{\lambda}(x)-v(x)=h(|x^{\lambda}|^{\mu-n}-|x|^{\mu-n})\\
  &-&\int_{\Sigma_{\lambda}}(\frac{1}{|x-y|^{n-\mu}}-\frac{1}{|x^{\lambda}-y|^{n-\mu}})
  (\frac{1}{|y^{\lambda}|^{\alpha}}v_{\lambda}(y)^{\tau}-\frac{1}{|y|^{\alpha}}v(y)^{\tau})dy.\nonumber
 \end{eqnarray}
  \end{Lem}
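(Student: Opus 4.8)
The plan is to derive the identity by the same reflection-and-subtraction computation used in Lemma \ref{lem1}, but now keeping track of the extra singular weight $|y|^{-\alpha}$ and the nonconstant ``boundary-like'' term $h|x|^{\mu-n}$ coming from equation (\ref{eqnv1}). First I would start from (\ref{eqnv1}) and split the integral over $\mathbf{R}^n$ as an integral over $\Sigma_\lambda$ plus an integral over $\Sigma_\lambda^c$. On $\Sigma_\lambda^c$ I make the change of variables $y\mapsto y^\lambda$, which maps $\Sigma_\lambda^c$ onto $\Sigma_\lambda$, sends $|x-y|$ to $|x-y^\lambda|=|x^\lambda-y|$, sends $v(y)$ to $v_\lambda(y)$, and sends $|y|^{-\alpha}$ to $|y^\lambda|^{-\alpha}$. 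This yields
\begin{equation}
h|x|^{\mu-n}-v(x)=\int_{\Sigma_\lambda}\frac{1}{|x-y|^{n-\mu}}\,\frac{v(y)^\tau}{|y|^\alpha}\,dy
+\int_{\Sigma_\lambda}\frac{1}{|x^\lambda-y|^{n-\mu}}\,\frac{v_\lambda(y)^\tau}{|y^\lambda|^\alpha}\,dy.
\end{equation}

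Next I would substitute $x\mapsto x^\lambda$ in this relation; since $|x^\lambda-y^\lambda|=|x-y|$ and $|x^\lambda|^{\mu-n}$ replaces $|x|^{\mu-n}$, I get
\begin{equation}
h|x^\lambda|^{\mu-n}-v_\lambda(x)=\int_{\Sigma_\lambda}\frac{1}{|x^\lambda-y|^{n-\mu}}\,\frac{v(y)^\tau}{|y|^\alpha}\,dy
+\int_{\Sigma_\lambda}\frac{1}{|x-y|^{n-\mu}}\,\frac{v_\lambda(y)^\tau}{|y^\lambda|^\alpha}\,dy.
\end{equation}
Subtracting the first displayed identity from the second, the mixed kernel terms regroup: the coefficient of $\bigl(|x-y|^{\mu-n}-|x^\lambda-y|^{\mu-n}\bigr)$ becomes exactly $|y^\lambda|^{-\alpha}v_\lambda(y)^\tau-|y|^{-\alpha}v(y)^\tau$, and the $h$-terms combine into $h\bigl(|x^\lambda|^{\mu-n}-|x|^{\mu-n}\bigr)$. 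This is precisely (\ref{eqnv2}), so the lemma follows.

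The only point requiring a little care — and the place I expect the main (though still mild) obstacle — is justifying the manipulations near the singularity at the origin: the weight $|y|^{-\alpha}$ blows up there, so one must check that each integral appearing in the split is absolutely convergent before splitting and changing variables. Here I would invoke the integrability furnished by (\ref{int08}) together with $\alpha>0$ and the local boundedness of $u$ (equivalently, the behavior of $v$ coming from $v(x)=|x|^{\mu-n}u(x/|x|^2)$), plus the Hardy--Littlewood--Sobolev bound (\ref{hls}) to control the kernel convolution; once absolute convergence is in hand, Fubini and the change of variables $y\mapsto y^\lambda$ are legitimate and the algebraic cancellation above goes through verbatim as in Lemma \ref{lem1}.
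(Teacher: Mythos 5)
Your proposal is correct and follows the same route as the paper's proof: split $\mathbf{R}^n$ into $\Sigma_\lambda$ and $\Sigma_\lambda^c$, change variables $y\mapsto y^\lambda$ on $\Sigma_\lambda^c$, then replace $x$ by $x^\lambda$ and subtract. The remark about checking absolute convergence near the origin before splitting and reflecting is a sensible extra precaution but does not change the argument, which is algebraically identical to Lemma \ref{lem1}'s derivation.
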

  \begin{proof} The proof is similar to Lemma \ref{lem1}.
  Let
  $$
  \Sigma_{\lambda}^{c}=\{x=(x_{1},\cdots,x_{n})|x_{1}<\lambda\}.
  $$
  Then it is easy to see that
  \begin{center}
  $
  \begin{array}{lll}
  h|x|^{\mu-n}-v(x) & = & \int_{\Sigma_{\lambda}}\frac{1}{|x-y|^{n-\mu}}\frac{1}{|y|^{\alpha}}v(y)^{\tau}dy
  +\int_{\Sigma_{\lambda}^{c}}\frac{1}{|x-y|^{n-\mu}}\frac{1}{|y|^{\alpha}}v(y)^{\tau}dy\\
  & = & \int_{\Sigma_{\lambda}}\frac{1}{|x-y|^{n-\mu}}\frac{1}{|y|^{\alpha}}v(y)^{\tau}dy
  +\int_{\Sigma_{\lambda}}\frac{1}{|x-y^{\lambda}|^{n-\mu}}\frac{1}{|y^{\lambda}|^{\alpha}}v(y^{\lambda})^{\tau}dy\\
  & = & \int_{\Sigma_{\lambda}}\frac{1}{|x-y|^{n-\mu}}\frac{1}{|y|^{\alpha}}v(y)^{\tau}dy
  +\int_{\Sigma_{\lambda}}\frac{1}{|x^{\lambda}-y|^{n-\mu}}\frac{1}{|y^{\lambda}|^{\alpha}}v_{\lambda}(y)^{\tau}dy.
  \end{array}
  $
  \end{center}
  Here we have used the fact that
  $|x-y^{\lambda}|=|x^{\lambda}-y|$. Substituting $x$ by $x^{\lambda}$, we get
  \begin{eqnarray*}
  h|x^{\lambda}|^{\mu-n}-v(x^{\lambda})&=&\int_{\Sigma_{\lambda}}\frac{1}{|x^{\lambda}-y|^{n-\mu}}\frac{1}{|y|^{\alpha}}v(y)^{\tau}dy
  \\
  &+&\int_{\Sigma_{\lambda}}\frac{1}{|x-y|^{n-\mu}}\frac{1}{|y^{\lambda}|^{\alpha}}v_{\lambda}(y)^{\tau}dy.
  \end{eqnarray*}
  Thus
  \begin{eqnarray*}
  v(x)-v(x^{\lambda})&=&h(|x^{\lambda}|^{\mu-n}-|x|^{\mu-n})\\
  &-&\int_{\Sigma_{\lambda}}(\frac{1}{|x-y|^{n-\mu}}-\frac{1}{|x^{\lambda}-y|^{n-\mu}})
  (\frac{1}{|y^{\lambda}|^{\alpha}}v_{\lambda}(y)^{\tau}-\frac{1}{|y|^{\alpha}}v(y)^{\tau})dy.
  \end{eqnarray*}
  This implies (\ref{eqnv2}).
  \end{proof}

We shall also need the following doubly weighted
  Hardy-Littlewood-Sobolev inequality of Stein
  and Weiss (see, for example, \cite{L83})
  \begin{equation}
  \|\int V(x,y)f(y)dy\|_{q}\leq
  P_{\alpha,\beta,p,\nu,n}\|f\|_{p} \label{hls2}
  \end{equation}
  with $V(x,y)=|x|^{-\beta}|x-y|^{-\nu}|y|^{-\alpha},$
  $0\leq\alpha<n/p^{'}$, $0\leq\beta<n/q$, $1/p+1/{p'}=1$, and
  $$1/p+(\nu+\alpha+\beta)/n=1+1/q.$$

  {\bf Proof of Theorem 2.}  Define
  $$
  \Sigma_{\lambda}^{-}=\{x|x\in\Sigma_{\lambda},v(x)<v_{\lambda}(x)\},
  $$
  and
  $$
\Sigma_{\lambda}^{+}=\{x|x\in\Sigma_{\lambda},v(x)\geq
v_{\lambda}(x)\}.
  $$
  We want to show that for sufficiently negative values of
  $\lambda$, $\Sigma_{\lambda}^{-}$ and $ \Sigma_{\lambda}^{+}$ must be empty.

  Whenever $x,y\in\Sigma_{\lambda}$, we have that $|x-y|\leq|x^{\lambda}-y|$.
  Moreover, since $\lambda<0$,
  $|y^{\lambda}|\geq|y|$ for any $y\in\Sigma_{\lambda}$.
  Then by Lemma \ref{lem2}, for any $x\in\Sigma_{\lambda}^{-}$,
   \begin{equation}
  \begin{array}{lll}
  v_{\lambda}(x)-v(x) & \leq & -\int_{\Sigma_{\lambda}}
  (|x-y|^{\mu-n}-|x^{\lambda}-y|^{\mu-n})
  |y|^{-\alpha}(v_{\lambda}(y)^{\tau}-v(y)^{\tau})dy\\
  & \leq & -\int_{\Sigma_{\lambda}^{-}}|x^{\lambda}-y|^{\mu-n}
  |y|^{-\alpha}(v_{\lambda}(y)^{\tau}-v(y)^{\tau})dy\\
  & \leq & -\tau\int_{\Sigma_{\lambda}^{-}}|x-y|^{\mu-n}|y|^{-\alpha}
  [v^{\tau-1}(v_{\lambda}-v)](y)dy.\label{int9}
  \end{array}
   \end{equation}
  It follows first from inequality (\ref{hls2}) and then the Holder inequality
  that, for any $q>n/(n-\mu)$,
  which will be used below in the form that $\tau<\mu$,
 \begin{equation}
  \begin{array}{lll}
  \|v_{\lambda}-v\|_{L^{q}(\Sigma_{\lambda}^{-})}
  & \leq & C\|\int_{\Sigma_{\lambda}^{-}}|x-y|^{\mu-n}|y|^{-\alpha}
  [v^{\tau-1}(v_{\lambda}-v)](y)dy\|_{L^{p}(\Sigma_{\lambda}^{-})}\\
  & \leq & C(\int_{\Sigma_{\lambda}^{-}}v(y)^{(\tau-1)\beta}dy)^{1/\beta}\|v_{\lambda}-v\|
  _{L^{q}(\Sigma_{\lambda}^{-})}\\
  & \leq & C(\int_{\Sigma_{\lambda}}v(y)^{(\tau-1)\beta}dy)^{1/\beta}\|v_{\lambda}-v\|
  _{L^{q}(\Sigma_{\lambda}^{-})}.\,\,\,\,\,\,\,\\
  \end{array}\label{ineq1}
   \end{equation}

  By condition (\ref{int08}), we can choose $N$
  sufficiently large, such that for $\lambda>N$, we have
  $$
  C(\int_{\Sigma_{\lambda}}v(y)^{(\tau-1)\beta}dy)^{1/\beta}\leq\frac{1}{2}.
  $$
  Now (\ref{ineq1}) implies that
  $$
  \|v_{\lambda}-v\|_{L^{q}(\Sigma_{\lambda}^{-})}=0,
  $$
  and therefore $\Sigma_{\lambda}^{-}$ must be measure zero.
  Then we can use the moving plane trick as before to get that
  $v$ is symmetric at zero with respect to the $x-1$ direction.
 This completes the proof of
Theorem \ref{thm2}.

\end{section}

{\bf Acknowledgement:} We would like to thank Prof. C.Gui for
pointing out that in our proof of Theorem \ref{morse}, we can
allow the kernel of $E$ can have infinite dimension.

\end{document}